\documentclass[11pt]{article}

\usepackage{tikz}
\usetikzlibrary{positioning}

\usepackage{amsmath,amsthm,subcaption,amssymb}
\usepackage{multirow}
\usepackage{float}

\usepackage[colorlinks,
linkcolor=blue,
anchorcolor=blue,
citecolor=blue
]{hyperref}

\numberwithin{equation}{section}

\usepackage[margin=2.9cm]{geometry}
\theoremstyle{plain}
\newtheorem{thm}{Theorem}[section]
\newtheorem{lemma}{Lemma}[section]

\DeclareMathOperator{\red}{red} 
\DeclareMathOperator{\asc}{asc}

\begin{document}

\begin{center}
{\large \bf Simultaneous avoidance of length-4 patterns in ascent sequences}
\end{center}

\begin{center}
Qi Liu$^{a}$, Sergey Kitaev$^{b}$, and Philip B. Zhang$^{c}$
\\[6pt]

$^{a,c}$College of Mathematical Sciences 
\& Institute of Mathematics and Interdisciplinary Sciences,
\\ Tianjin Normal University, Tianjin  300387, P. R. China\\[6pt]

$^{b}$Department of Mathematics and Statistics, University of Strathclyde, \\ 26 Richmond Street, Glasgow G1 1XH, UK\\[6pt]

Email: $^{a}${\tt  liuqilq67@yeah.net},
           $^{b}${\tt sergey.kitaev@strath.ac.uk},
           $^{c}${\tt zhang@tjnu.edu.cn}
\end{center}

\begin{center}\textbf{Abstract.}\end{center}

Ascent sequences form a central class of combinatorial objects, as they are in bijection with several important families such as {\bf (2+2)}-free posets, Stoimenow matchings, and other Fishburn objects, and are enumerated by the Fishburn numbers.

We study pattern avoidance in ascent sequences for the five patterns of length~$4$:
$0101$, $0102$, $0112$, $0120$, and $0121$. These patterns arise naturally from recent work on pattern avoidance in related families of Fishburn objects, including Stoimenow matchings and {\bf (2+2)}-free posets. We enumerate ascent sequences avoiding any subset of these patterns, with the exception of the sets $\{0120\}$, $\{0121\}$, and $\{0120,0121\}$, for which the enumeration remains open.

Our results reveal that the corresponding avoidance classes fall into $16$ Wilf equivalence classes and exhibit a wide range of enumerative behaviour, including connections to classical sequences such as the Catalan and Fibonacci numbers, as well as polynomial formulas and rational generating functions; several of the sequences we obtain appear to be new. Our methods combine structural decompositions with generating-tree techniques and, in several cases, rely on reductions to shorter patterns via restricted growth functions. This work contributes to the broader study of pattern avoidance across Fishburn families and highlights further connections between ascent sequences and other combinatorial structures.\\[-3mm]

\noindent\textbf{\bf Keywords:} ascent sequence, pattern avoidance\\[-3mm]

\noindent {\bf AMS Subject Classifications:} 05A05, 05A15.

\section{Introduction}\label{sec:intro}

An \emph{ascent} in an integer sequence $s_1 s_2 \cdots s_m$ is an index
$1 \le j \le m-1$ such that $s_j < s_{j+1}$.
An \emph{ascent sequence} is a sequence of nonnegative integers
$x = x_1 x_2 \cdots x_n$ satisfying $x_1 = 0$ and, for each $i \ge 2$,
\[
x_i \le 1+\asc(x_1x_2\cdots x_{i-1}).
\]
For example, 0101024 is an ascent sequence. Elements of ascent sequences are referred to as letters. The number of ascent sequences is given by the \emph{Fishburn numbers}; this is sequence A022493 in the Online Encyclopedia of Integer Sequences (OEIS)~\cite{OEIS}.

Pattern avoidance for ascent sequences is formulated in analogy with permutation patterns, but allowing repetitions.
Given an integer sequence $w = w_1 w_2 \cdots w_m$, its \emph{reduction}, denoted $\red(w)$, is obtained by replacing the $i$th smallest distinct letter appearing in $w$ by $i-1$. For instance, $\red(2664562) = 0331230$.
A \emph{pattern} is simply a reduced sequence.
An ascent sequence $x = x_1 x_2 \cdots x_n$ \emph{contains} a pattern $p = p_1 p_2 \cdots p_k$ if there exist indices $1 \le i_1 < i_2 < \cdots < i_k \le n$ such that $\red(x_{i_1} x_{i_2} \cdots x_{i_k}) = p$; otherwise, $x$ \emph{avoids} $p$.
For a finite set of patterns $P$, let $\mathcal{A}_n(P)$ denote the set of ascent sequences of length $n$ avoiding every pattern in $P$, and write $a_P(n) = |\mathcal{A}_n(P)|$. Similarly, for permutations, let $S_n(P)$ denote the set of permutations of length $n$ that avoid each pattern in $P$.
For two sets of patterns $P$ and $Q$, we say that $P$ and $Q$ are \emph{Wilf-equivalent} if $a_P(n) = a_Q(n)$ for all $n \geq 1$.

Ascent sequences were introduced in \cite{BMCDK} by Bousquet-M\'elou, Claesson, Dukes, and Kitaev in connection with  \textbf{(2+2)}-free posets
(which are equinumerous with interval orders) and have since become a central family in enumerative
combinatorics; see, for example, \cite{BaxterPudwell2015}, \cite{CallanMansour2025}--\cite{P2016}, \cite{Y2014}. In particular, the study of pattern avoidance in ascent sequences has developed in close analogy with the corresponding theory for permutations, beginning with patterns of length $3$ and gradually extending to more complex settings.

Early work focused on avoiding one or more patterns of length $3$. For instance, in \cite{BaxterPudwell2015}, Baxter and Pudwell investigated ascent sequences avoiding pairs of patterns of length $3$, obtaining exact enumerations for $16$ different pairs. Their methods include simple recurrences, generating trees, and bijections to other combinatorial structures such as Dyck paths and pattern-avoiding permutations. Further developments in this direction include the work of Callan and Mansour \cite{CallanMansour2025}, who studied ascent sequences avoiding triples of length-$3$ patterns. They showed that there are $62$ distinct Wilf equivalence classes, using a combination of bijective arguments and generating tree techniques.

The investigation of longer patterns has revealed a richer and more intricate structure. In \cite{P2016}, Pudwell proved that ascent sequences avoiding $0021$, as well as those avoiding both $201$ and $210$, are enumerated by the binomial convolution of the {\em Catalan numbers} $C_n = \frac{1}{n+1}\binom{2n}{n}$. This result completes the Wilf classification for single patterns of length $4$ and extends earlier results on pairs of length-$3$ patterns. Moving to simultaneous avoidance of patterns of length $4$, Callan, Mansour, and Shattuck \cite{CMS2014} classified all pairs $\{u_1u_2u_3u_4, v_1v_2v_3v_4\}$ for which $a_{\{u_1u_2u_3u_4, v_1v_2v_3v_4\}}(n)=C_n$. Moreover, in $5$ out of $8$ cases, they refined this enumeration by showing that the number of such sequences with exactly $m$ ascents is given by the {\em Narayana numbers} $N_{n,m+1} = \frac{1}{n}\binom{n}{m+1}\binom{n}{m}, \quad 0 \le m < n$.
They also constructed a bijection between ascent sequences avoiding $0011$ and $0021$ and Dyck paths.

In this paper, we study avoidance of the five patterns of length~$4$:
\[
0101, \quad 0102, \quad 0112, \quad 0120, \quad 0121.
\]
We enumerate $A_P(n)$ for all subsets
$P \subseteq \{0101, 0102, 0112, 0120, 0121\}$ except for the subsets
$\{0120\}$, $\{0121\}$, and $\{0120,0121\}$.
Our enumerative results are summarized in Table~\ref{tab-results}. They show that these subsets fall into 16 Wilf equivalence classes (grouped together) and exhibit a wide spectrum of behaviour, ranging from classical sequences (such as the Catalan and Fibonacci numbers) to polynomial formulas and rational generating functions. In Table~\ref{tab-results}, for each class we record initial values, an OEIS identifier (when available), and an explicit enumeration (either in closed form or via an ordinary generating function). Cases of enumeration sequences that appear to be new are marked ``New'', and entries for which we do not yet have a closed form are marked ``?''.

Our work is part of a broader program on pattern avoidance in Fishburn objects. The choice of the above five patterns is motivated by recent developments connecting different families of such objects. In~\cite{LKZ2025}, Lv, Kitaev, and Zhang addressed a problem posed by Bevan et~al.~\cite{Bevan-Cheon-Kitaev} on identifying subclasses of Stoimenow matchings that are enumerated by the Catalan numbers. They obtained five such subclasses, each defined via avoidance of a certain pattern.

A key feature of Fishburn objects is that they are related by a network of bijections (see, e.g.,~\cite{LKZ2025}). However, these bijections do not, in general, preserve pattern avoidance in an enumerative sense: if a pattern $p$ on one class is mapped to a pattern $f(p)$ on another class via a bijection $f$, then the number of $p$-avoiding objects need not coincide with the number of $f(p)$-avoiding objects. This makes it natural to study the images of significant patterns across different Fishburn families.

The five patterns considered in~\cite{LKZ2025} on matchings correspond, via the bijection in \cite{BMCDK}, to five forbidden configurations $\mathcal{F}_1,\ldots,\mathcal{F}_5$ in {\bf (2+2)}-free posets, shown below:

\begin{center}
\begin{tabular}{ccccccccc}
\begin{tikzpicture}[scale=0.5, thick]
    \draw(0,0)--(0,0.7)--(0,1.4);
    \node[circle,fill,inner sep=1.5pt] at (0,0) {};
    \node[circle,fill,inner sep=1.5pt] at (0,0.7) {};
    \node[circle,fill,inner sep=1.5pt] at (0,1.4) {};
    \node[circle,fill,inner sep=1.5pt] at (0.7,0) {};
   
    \node[below=5pt, text centered] at (0.35, 0) {$\mathcal{F}_1$};
\end{tikzpicture}

&
&

\begin{tikzpicture}[scale=0.5, thick]
    \draw(0,0)--(0,1)--(1,0)--(1,1);
    \node[circle,fill,inner sep=1.5pt] at (0,0) {};
    \node[circle,fill,inner sep=1.5pt] at (0,1) {};
    \node[circle,fill,inner sep=1.5pt] at (1,0) {};
    \node[circle,fill,inner sep=1.5pt] at (1,1) {};
    \node[below=5pt, text centered] at (0.5, 0) {$\mathcal{F}_2$};
\end{tikzpicture}

&
&

\begin{tikzpicture}[scale=0.5, thick]
    \draw(0.7,0)--(0,0.7)--(0.7,1.4)--(1.4,0.7)--(0.7,0);
    \node[circle,fill,inner sep=1.5pt] at (0.7,0) {};
    \node[circle,fill,inner sep=1.5pt] at (0,0.7) {};
    \node[circle,fill,inner sep=1.5pt] at (1.4,0.7) {};
    \node[circle,fill,inner sep=1.5pt] at (0.7,1.4) {};
    \node[below=5pt, text centered] at (0.7, 0) {$\mathcal{F}_3$};
\end{tikzpicture}

&
&

\begin{tikzpicture}[scale=0.5, thick]
    \draw(0,0.7)--(0.7,1.5)--(1.4,0.7);
    \draw (1.4,0)--(1.4,0.7);
    \node[circle,fill,inner sep=1.5pt] at (1.4,0) {};
    \node[circle,fill,inner sep=1.5pt] at (0,0.7) {};
    \node[circle,fill,inner sep=1.5pt] at (1.4,0.7) {};
    \node[circle,fill,inner sep=1.5pt] at (0.7,1.5) {};
    \node[below=5pt, text centered] at (0.7, 0) {$\mathcal{F}_4$};
\end{tikzpicture}

&
&
\begin{tikzpicture}[scale=0.5, thick]
    \draw(0.7,0)--(0,0.7);
    \draw(0.7,0)--(1.4,0.7)--(1.4,1.4);
    
    \node[circle,fill,inner sep=1.5pt] at (0.7,0) {};
    \node[circle,fill,inner sep=1.5pt] at (0,0.7) {};
    \node[circle,fill,inner sep=1.5pt] at (1.4,1.4) {};
    \node[circle,fill,inner sep=1.5pt] at (1.4,0.7) {};
   
    \node[below=5pt, text centered] at (0.55, 0) {$\mathcal{F}_5$};
\end{tikzpicture}

\end{tabular}
\end{center}

Recent work by Liu et al.~\cite{LKLZ2025} studies avoidance of subsets of $\{\mathcal{F}_1,\ldots,\mathcal{F}_5\}$ in {\bf (2+2)}-free posets, extending earlier results in the literature. In particular, the avoidance of a single pattern among $\mathcal{F}_1,\ldots,\mathcal{F}_5$ has been well studied: avoiding $\mathcal{F}_1$ (resp., $\mathcal{F}_2$) yields the Catalan numbers~\cite{KimRou} (resp.,~\cite{DisFerPinRin,DisantoPerPinRin}), while further results include the joint avoidance of $\mathcal{F}_1$ and $\mathcal{F}_2$~\cite{DisantoPerPinRin}, and the avoidance of $\mathcal{F}_3$ and $\mathcal{F}_5$ (equivalently, $\mathcal{F}_4$)~\cite{Gowravaram}. We also recall that {\bf (2+2,3+1)}-free posets are precisely {\em semiorders}, introduced by Luce~\cite{Luce}.

Via the bijection $\Psi$ described in~\cite{LKZ2025}, the posets $\mathcal{F}_1,\ldots,\mathcal{F}_5$ correspond, respectively, to the ascent sequence patterns 0120,  0101,  0112,  0102,  and 0121.
This correspondence motivates our choice of patterns. Our goal is to extend the study of these distinguished avoidance classes to ascent sequences.

The paper is organized as follows. In Section~\ref{sec:prelim}, we discuss the relevant basic notions, establish preliminary lemmas that reduce certain pattern-avoidance problems on ascent sequences to shorter patterns, and give references to known enumerative results for the avoidance of a single pattern. Sections~\ref{avoid-pairs-sec}, \ref{avoid-triple-sec}, and \ref{four-sec} treat the avoidance of pairs, triples, and sets of four and five patterns, respectively, using structural decompositions and, when appropriate, generating-tree arguments. Finally, in Section~\ref{concluding}, we provide concluding remarks.

\begin{table}[htbp]
\centering

\renewcommand{\arraystretch}{1.18}
\setlength{\tabcolsep}{4pt}

\begin{tabular}{|c|c|c|c|c|}
\hline
{\scriptsize Patterns} &
{\scriptsize Sequences for $n\ge 1$} &
{\scriptsize OEIS} &
{\scriptsize Enumeration $(n\ge 2)$} &
{\scriptsize Ref.} \\
\hline
\hline

&&&&\\[-4.5mm]
\scriptsize{$\{0101\}$} &
\scriptsize{$1,2,5,14,42,132,429,1430,\ldots$} &
\scriptsize{A000108} &
\scriptsize{
\begin{tabular}{@{}c@{}}
$\frac{1}{n+1}\binom{2n}{n}$
\end{tabular}} &
\scriptsize{\cite{DuncanSteingrimsson2011}}\\[2mm]
\hline

&&&&\\[-4.5mm]
\scriptsize{$\{0102\}$} &
\multirow{2}{*}{\scriptsize{$1,2,5,14,41,122,365,1094,\ldots$}} &
\multirow{2}{*}{\scriptsize{A007051}} &
\multirow{2}{*}{\scriptsize{
\begin{tabular}{@{}c@{}}
$\dfrac{3^{\,n-1}+1}{2}$
\end{tabular}}} &
\multirow{2}{*}{\scriptsize{\cite{DuncanSteingrimsson2011}}}\\
\cline{1-1}
\scriptsize{$\{0112\}$} &&&&\\
\hline

&&&&\\[-4.5mm]
\scriptsize{$\{0120\}$} &
\scriptsize{$1,2,5,14,42,133,443,1552,5721,\ldots$} &
\scriptsize{New} &
\scriptsize{?} &
\scriptsize{N/A}\\
\hline

&&&&\\[-4.5mm]
\scriptsize{$\{0121\}$} &
\scriptsize{$1,2,5,14,42,133,443,1551,5701,\ldots$} &
\scriptsize{New} &
\scriptsize{?} &
\scriptsize{N/A}\\
\hline
\hline

&&&&\\[-4.5mm]
\scriptsize{$\{0101,0102\}$} &
\multirow{2}{*}{\scriptsize{$1,2,5,13,34,89,233,610,\ldots$}} &
\multirow{2}{*}{\scriptsize{A001519}} &
\multirow{2}{*}{\scriptsize{
\begin{tabular}{@{}c@{}}
$F_{2n-1}$\\
\end{tabular}}} &
\multirow{2}{*}{\scriptsize{Thm~\ref{thm:L2-2}}}\\
\cline{1-1}
\scriptsize{$\{0101,0121\}$} &&&&\\
\hline

&&&&\\[-4.5mm]
\scriptsize{$\{0101,0112\}$} &
\multirow{4}{*}{\scriptsize{$1,2,5,13,33,81,193,449,\ldots$}} &
\multirow{4}{*}{\scriptsize{A005183}} &
\multirow{4}{*}{\scriptsize{
\begin{tabular}{@{}c@{}}
$(n-1)2^{\,n-2}+1$\\
\end{tabular}}} &
\multirow{4}{*}{\scriptsize{Thm~\ref{thm:L2-4}}}\\
\cline{1-1}
\scriptsize{$\{0102,0112\}$} &&&&\\
\cline{1-1}
\scriptsize{$\{0112,0121\}$} &&&&\\
\cline{1-1}
\scriptsize{$\{0102,0120\}$} &&&&\\
\hline

&&&&\\[-4.5mm]
\scriptsize{$\{0101,0120\}$} &
\scriptsize{$1,2,5,13,33,82,202,497,\ldots$} &
\scriptsize{A116703} &
\scriptsize{$\dfrac{(1-x)^3}{1-4x+5x^2-3x^3}$} &
\scriptsize{Thm~\ref{thm:0101-0120}}\\[2mm]
\hline

&&&&\\[-4.5mm]
\scriptsize{$\{0102,0121\}$} &
\scriptsize{$1,2,5,13,32,74,163,347,\ldots$} &
\scriptsize{A116702} &
\scriptsize{
\begin{tabular}{@{}c@{}}
$\frac{3\cdot 2^n-n^2-n-2}{2}$\\[1mm]
\end{tabular}} &
\scriptsize{Thm~\ref{thm:0102-0121}}\\
\hline

&&&&\\[-4.5mm]
\scriptsize{$\{0112,0120\}$} &
\scriptsize{$1,2,5,13,31,67,134,254,466,\ldots$} &
\scriptsize{New} &
\scriptsize{
\begin{tabular}{@{}c@{}}
$2^{\,n-1}+\binom{n+1}{4}$\\[1mm]
\end{tabular}} &
\scriptsize{Thm~\ref{thm:L2-0112,0120}}\\
\hline

&&&&\\[-4.5mm]
\scriptsize{$\{0120,0121\}$} &
\scriptsize{$1,2,5,13,34,90,244,683,1980,\ldots$} &
\scriptsize{New} &
\scriptsize{?} &
\scriptsize{N/A}\\
\hline
\hline

&&&&\\[-4.5mm]
\scriptsize{$\{0101,0102,0112\}$} &
\multirow{6}{*}{\scriptsize{$1,2,5,12,27,58,121,248,\ldots$}} &
\multirow{6}{*}{\scriptsize{A000325}} &
\multirow{6}{*}{\scriptsize{
\begin{tabular}{@{}c@{}}
$2^n-n$\\
\end{tabular}}} &
\multirow{6}{*}{\scriptsize{Thm~\ref{thm:L3-6}}}\\
\cline{1-1}
\scriptsize{$\{0101,0102,0120\}$} &&&&\\
\cline{1-1}
\scriptsize{$\{0101,0102,0121\}$} &&&&\\
\cline{1-1}
\scriptsize{$\{0101,0112,0121\}$} &&&&\\
\cline{1-1}
\scriptsize{$\{0101,0120,0121\}$} &&&&\\
\cline{1-1}
\scriptsize{$\{0102,0120,0121\}$} &&&&\\
\hline

&&&&\\[-4.5mm]
\scriptsize{$\{0101,0112,0120\}$} &
\scriptsize{$1,2,5,12,25,47,82,135,212,\ldots$} &
\scriptsize{A116722} &
\scriptsize{
\begin{tabular}{@{}c@{}}
$\frac{n^4-6n^3+47n^2-114n+120}{24}$\\[1mm]
\end{tabular}} &
\scriptsize{Thm~\ref{thm:L3-1}}\\
\hline

&&&&\\[-4.5mm]
\scriptsize{$\{0102,0112,0120\}$} &
\multirow{3}{*}{\scriptsize{$1,2,5,12,26,52,99,184,340,\ldots$}} &
\multirow{3}{*}{\scriptsize{A116725}} &
\multirow{3}{*}{\scriptsize{
\begin{tabular}{@{}c@{}}
$2^{n-1}+\binom{n}{3}$\\
\end{tabular}}} &
\multirow{3}{*}{\scriptsize{Thm~\ref{thm:L3-3}}}\\
\cline{1-1}
\scriptsize{$\{0102,0112,0121\}$} &&&&\\
\cline{1-1}
\scriptsize{$\{0112,0120,0121\}$} &&&&\\
\hline
\hline

&&&&\\[-4.5mm]
\scriptsize{$\{0101,0102,0112,0120\}$} &
\multirow{3}{*}{\scriptsize{$1,2,5,11,21,36,57,85,121,\ldots$}} &
\multirow{3}{*}{\scriptsize{A050407}} &
\multirow{3}{*}{\scriptsize{
\begin{tabular}{@{}c@{}}
$1+\binom{n+1}{3}$\\
\end{tabular}}} &
\multirow{3}{*}{\scriptsize{Thm~\ref{thm:L4-3}}}\\
\cline{1-1}
\scriptsize{$\{0101,0102,0112,0121\}$} &&&&\\
\cline{1-1}
\scriptsize{$\{0101,0112,0120,0121\}$} &&&&\\
\hline

&&&&\\[-4.5mm]
\scriptsize{$\{0101,0102,0120,0121\}$} &
\multirow{2}{*}{\scriptsize{$1,2,5,11,22,42,79,149,284,\ldots$}} &
\multirow{2}{*}{\scriptsize{New}} &
\multirow{2}{*}{\scriptsize{
\begin{tabular}{@{}c@{}}
$2^{n-1}+\binom{n-1}{2}$
\end{tabular}}} &
\multirow{2}{*}{\scriptsize{Thm~\ref{thm:GT-0101-0102-0120-0121}}}\\
\cline{1-1}
\scriptsize{$\{0102,0112,0120,0121\}$} &&&&\\
\hline
\hline

&&&&\\[-4.5mm]
\scriptsize{$\{0101,0102,0112,0120,0121\}$} &
\scriptsize{$1,2,5,10,17,26,37,50,65,\ldots$} &
\scriptsize{A002522} &
\scriptsize{
\begin{tabular}{@{}c@{}}
$(n-1)^2+1$\\
\end{tabular}} &
\scriptsize{Thm~\ref{thm:A-0101-0102-0112-0120-0121}}\\
\hline

\end{tabular}

\caption{A summary of our enumerative results.}\label{tab-results}
\end{table}

\section{Preliminaries}\label{sec:prelim}

A sequence $x_1 x_2 \cdots x_n$ of nonnegative integers is called a \emph{restricted growth function} (\emph{RGF}) if, for each $k \geq 1$, the first occurrence of $k$ is preceded by an occurrence of $k-1$. In particular, it follows that this first occurrence must be preceded by occurrences of each of $0,1,\ldots,k-1$. For example, the ascent sequence $01002030$ is an RGF, while the ascent sequence $010103$ is not.  We will frequently use the following lemma, which appears as Lemma~2.4 in~\cite{DuncanSteingrimsson2011}.

\begin{lemma}[\cite{DuncanSteingrimsson2011}]\label{RGF-lem}
Let $p$ be a pattern. Then $A_p(n)$ consists solely of RGFs if and only if $p$ is a subpattern of $01012$.
\end{lemma}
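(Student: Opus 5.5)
We prove the two directions separately. Write $A_p(n)$ for the set of $p$-avoiding ascent sequences of length $n$. The basic observation is that an ascent sequence fails to be an RGF exactly when some letter $k\ge 1$ first appears before any occurrence of $k-1$.

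\emph{Sufficiency.} Suppose $p$ is a subpattern of $01012$. It is enough to show that every ascent sequence $x$ that is not an RGF contains $01012$, because then $x$ also contains $p$. Take the leftmost position $i$ such that $x_i=k$ is the first occurrence of $k$ and $k-1$ has not yet appeared. Then the prefix $x_1\cdots x_{i-1}$ is an RGF whose largest letter is $m\le k-2$. Since $x_i\le 1+\asc(x_1\cdots x_{i-1})$, we have $\asc(x_1\cdots x_{i-1})\ge k-1\ge m+1$.

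Next we count ascents in an RGF prefix whose letters all lie in $\{0,\dots,m\}$. Each first occurrence of $1,\dots,m$ creates an ascent, giving $m$ of them; call these the ``new-letter'' ascents. So there is at least one further ascent $x_j<x_{j+1}$ with $j+1<i$ in which $x_{j+1}$ is not a first occurrence. Let $a=x_j$ and $b=x_{j+1}$ with $a<b$. Both $a$ and $b$ occurred before position $j$: $b$ did because $x_{j+1}$ is a repeat, and so did $a$, because $a<b$ and, in an RGF, the first occurrence of $b$ is preceded by one of $a$. We need the earlier occurrences in the order $a\cdots b$, then $a\,b$ at positions $j,j+1$, and then $x_i=k>b$. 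The RGF property gives exactly this: the first $a$ precedes the first $b$, and both precede position $j$. This yields the occurrence $a\,b\,a\,b\,k$ of $01012$. The key point to get right is that this extra ascent exists and is a genuine repeat; that follows from the counting above.

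\emph{Necessity.} Suppose $p$ is not a subpattern of $01012$. We must exhibit a non-RGF ascent sequence avoiding $p$. The natural candidates are sequences of the form $0^a(01)^b\,\ell\cdots$, which jump to a large letter after enough ascents. Consider $x=0101\cdots01\,k$ with $k$ ones and final letter $k\ge2$, together with variants such as $0101\cdots 01\,k\,k\cdots$ or with trailing $0$s. Every pattern contained in the basic $x$ reduces to a subsequence of $0101\cdots012$. So if $p$ avoids such an $x$, the only obstruction is that $p$ might embed into the alternating part with one top letter at the end.

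The step I expect to be hardest is the case analysis on $p$. If $p$ contains $012\cdots$ beyond two alternations followed by a top letter, then $p$ is a subword of $(01)^r2$. Such words are subpatterns of $01012$ only when they are short, so for longer $p$ we must pick a different witness. One option is $0^a1\,k$-type sequences, which are impossible because they do not have enough ascents. Another is to use repeated letters: $0\,1\,0\,1\,0\,1\,3$ is non-RGF, and its patterns are those of $(01)^r2$. If $p$ uses two large letters, the witness with a single top letter avoids it; if $p$ has a descent after its maximum, append nothing after $k$. I would carry out this classification by the shape of $p$ (its number of distinct letters, whether its maximum is last, and the number of alternations), choosing in each case $x=(01)^{r}k$ with $r$ minimal, and $k=r+1$, that avoids $p$.
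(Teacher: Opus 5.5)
Your sufficiency argument is correct and complete. The paper does not prove this lemma; it only cites Duncan and Steingr\'imsson. The argument goes as follows. At the first violation $x_i=k$, the prefix is an RGF with maximum $m\le k-2$ but at least $m+1$ ascents. Only $m$ of those ascents end at first occurrences, so some ascent $x_jx_{j+1}=ab$ ends at a repeated letter. The first occurrences of $a$ and $b$ precede position $j$ in that order, and $k\ge m+2>b$. This gives $abab\,k$, an occurrence of $01012$ and hence of every subpattern of it.

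The necessity direction, however, is not proved, and the reasoning you give for why it is hard is mistaken. You never fix a witness and verify it; you only announce a case analysis on the shape of $p$ and leave it undone. In particular, the claim that for longer $p$ (e.g.\ subpatterns of $(01)^r2$ not contained in $01012$) ``we must pick a different witness'' is backwards. A witness of length $5$ automatically avoids every pattern longer than $5$, and longer witnesses such as $0101013$ only contain more patterns.

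The missing observation is that one sequence works for every $p$. Let $x=01013$.
\begin{itemize}
\item It is an ascent sequence, because $\asc(0101)=2$ permits a final letter up to $3$.
\item It is not an RGF, because $3$ occurs with no earlier $2$.
\item Since $\red(x)=01012$, every subsequence of $x$ has the same reduction as the subsequence of $01012$ in the same positions.
\end{itemize}
Hence every pattern contained in $x$ is a subpattern of $01012$, and $x\in\mathcal{A}_5(p)$ whenever $p$ is not a subpattern of $01012$.

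This $x$ is just your $(01)^rk$ with $r=2$, which is already the minimal choice, since $01k$ forces $k\le 2$. No classification of $p$ is needed. The statement has to be read as ``for all $n$'', because every ascent sequence of length at most $4$ is an RGF; so the single length $n=5$ suffices for this direction.
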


The following lemma allows us to reduce certain pattern-avoidance problems to shorter patterns, thereby enabling the application of known structural results.

\begin{lemma}\label{RGF-lem-wilf}
If an ascent sequence $x$ is an RGF, then $x$ contains $0101$ (resp., $0102$, $0120$, $0121$) if and only if it contains $101$ (resp., $102$, $120$, $021$).
\end{lemma}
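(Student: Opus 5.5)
One direction is immediate in each case: if $x$ contains the length-4 pattern, deleting the initial $0$ of the occurrence leaves an occurrence of the length-3 pattern. For $0101$ the last three entries $b\,a\,b$ with $a<b$ reduce to $101$. For $0102$ they reduce to $102$, and for $0120$ to $120$. For $0121$ the last three entries are $b\,c\,b$ with $b<c$, which reduces to $010$ rather than $021$. So for $0121$ I would instead delete the second entry. An occurrence $a\,b\,c\,b$ with $a<b<c$ then yields $a\,c\,b$, which reduces to $021$. Hence the ``only if'' direction needs no RGF hypothesis; the content lies in the converse.

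For the converse the plan uses one property of RGFs: any value $v\ge 1$ has its first occurrence preceded by an occurrence of every smaller value, in particular by a $0$ and by a $v-1$. The strategy is to take an occurrence of the length-3 pattern and prepend a suitable smaller letter that occurs before its first entry.

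\emph{Case $101$:} take an occurrence $b\,a\,b$ at positions $i<j<k$ with $a<b$. Let $i'\le i$ be the first occurrence of $b$. Since $b>a\ge 0$, the value $a<b$ occurs before $i'$. This gives $a\,b\,a\,b$, an occurrence of $0101$. \emph{Case $102$:} take $b\,a\,c$ with $a<b<c$. Again $b\ge 1$, so $a$ occurs before the first occurrence of $b$, giving $a\,b\,a\,c$, an occurrence of $0102$. \emph{Case $120$:} take $b\,c\,a$ with $a<b<c$. Here $a$ must occur before the first occurrence of $b$, giving $a\,b\,c\,a$, an occurrence of $0120$. In each case the prepended letter equals the minimal letter of the occurrence, and this is exactly what the RGF property supplies.

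\emph{Case $021$:} take $a\,c\,b$ with $a<b<c$ at positions $i<j<k$. We need some $b'$ with $a<b'<c$ occurring strictly between $a$ and $c$ and again after $c$; the target form is $a\,b'\,c\,b'$. This is the main obstacle, because the missing letter has to sit in the middle rather than be prepended. I would first try: if some occurrence of $b$ lies between positions $i$ and $j$, we are done. Otherwise, consider the first occurrence of $c$; by the RGF property, every value below $c$, in particular $b$, appears before it. So $b$ appears before position $j$, but possibly before $i$. In that situation I would re-choose the $a$: take $a'=0$ if $b\ge 1$... but $0$ might also only appear early. To handle this, I would take the first occurrence of $b$ itself, say at position $p$, and use the fact that $b-1$ (and $0$) occur before $p$. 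Taking $a''=0$ at its first position, which is position $1$, gives the subsequence $0\,b\,c\,b$ using position $1$, the first $b$ (before $c$'s first occurrence $\le j$), $c$ at $j$, and $b$ at $k$. Since $0<b$ (as $b>a\ge0$), this is an occurrence of $0121$. The proof is then complete, and the only point to check carefully is the ordering of positions $1<p<j<k$, with $p<j$ guaranteed because $b<c$ must appear before the first $c$.
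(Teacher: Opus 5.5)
Your proof is correct and uses the same idea as the paper: extend an occurrence of the length-3 pattern by using the fact that, in an RGF, every smaller value appears before the first occurrence of a given value. The difference is in the choice of witnesses, and here your choices are the ones that actually work.

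For $101$, $102$, $120$ the paper prepends a $0$ to the occurrence $x_ix_jx_k$. If the smallest letter $a$ of the occurrence is positive, this produces $0\,b\,a\,b$, $0\,b\,a\,c$, $0\,b\,c\,a$, which reduce to $0212$, $0213$, $0231$. For instance, in $012121$ the only occurrence of $101$ is $2\,1\,2$. Your choice of a copy of $a$ taken before the first $b$ always gives the right pattern.

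For $021$ the paper uses $x_ix_tx_jx_k$ with $x_t=x_k$, knowing only that $t<j$. This breaks when $t<i$, e.g.\ for the occurrence $0\,2\,1$ at positions $3,4,5$ of $01021$. Your witness avoids this: it consists of $x_1=0$, the first occurrence $p$ of $b$, $x_j$ and $x_k$. Here $b\ge 1$ forces $1<p$, and the RGF property gives $p<j$.

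Your remark that the forward direction for $0121$ needs the second letter deleted, not the first, is also a detail the paper's ``immediate'' skips. In a write-up, drop the exploratory detour in the $021$ case and state the final witness directly.
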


\begin{proof}
The forward implication is immediate in each case.

For $p\in\{0101,0102,0120\}$, let $p^*\in\{101,102,120\}$ be the corresponding pattern. If $x$ contains $p^*$, say in positions $i<j<k$, then, since $x$ is an RGF, there exists $t<i$ such that $x_t=0$. Hence $x_t x_i x_j x_k$ is order-isomorphic to $p$.

Finally, suppose that $x$ contains $021$. Then there exist indices $i<j<k$ such that $x_i<x_k<x_j$. Since $x$ is an RGF, every letter in $\{0,1,\dots,x_j-1\}$ appears in $x_1x_2\cdots x_{j-1}$. In particular, there exists $t<j$ such that $x_t=x_k$. Thus $x_i x_t x_j x_k$ is order-isomorphic to $0121$.
\end{proof}

For single patterns in Table~\ref{tab-results}, $0101$-avoiding ascent seqyebces were enumerated in \cite[Theorem~2.5]{DuncanSteingrimsson2011}, while the patterns  $0102$ and $0112$ (also $102$) were shown to be Wilf-equivalent, and enumerated, in \cite[Theorem~2.9]{DuncanSteingrimsson2011}. The enumeration of $0120$-avoiding ascent sequences and that of $0121$-avoiding ascent sequences remain open problems. 

Many of our enumerative results for the simultaneous avoidance of more than one pattern are obtained using the method of \emph{generating trees}; we refer to~\cite{BanBouDenFlaGarGou} for a detailed introduction to this method and the techniques for deriving the associated recurrence relations and generating functions.  A generating tree is a rooted tree in which each pattern-avoiding ascent sequence of length~$n$ appears exactly once at level~$n$. Such a tree is constructed by specifying suitable labels and succession rules.

Throughout the paper, for an ascent sequence $x=x_1x_2\cdots x_n$, we write $x\mapsto xt$ for the extension of $x$ by a letter $t$, and let $\max(x)$ denote the maximum letter of $x$. Moreover, since every pattern considered in this paper contains at least one of $0101$, $0102$, and $0112$, it follows from Lemma~\ref{RGF-lem} that every ascent sequence under consideration is an RGF. Consequently, if $\max(x)=m$ and $x\mapsto xt$, then $t\in\{0,1,\dots,m+1\}$.

\section{Avoidance of pairs of patterns}\label{avoid-pairs-sec}

\subsection{Pairs \texorpdfstring{$\{0101,0102\}$ and $\{0101,0121\}$}{{0101,0102} and {0101,0121}}}

In the first part of the proof of the following theorem, we provide full derivational details; in subsequent proofs, some of these details will be omitted.

\begin{thm}\label{thm:L2-2}
For $n\geq 1$, we have $a_{\{0101,0102\}}(n)=a_{\{0101,0121\}}(n)=F_{2n-1}$, where $F_n$ is the $n$-th Fibonacci number defined by $F_1=F_2=1$ and, for $n \ge 2$, $F_n=F_{n-1}+F_{n-2}$.
\end{thm}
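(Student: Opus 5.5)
By Lemma~\ref{RGF-lem}, every sequence in either class is an RGF. By Lemma~\ref{RGF-lem-wilf}, inside RGFs, avoiding $\{0101,0102\}$ is the same as avoiding $\{101,102\}$, and avoiding $\{0101,0121\}$ is the same as avoiding $\{101,021\}$. So the plan is to describe these RGFs structurally, set up a generating tree with a small number of labels, and check that the resulting recurrence is $a(n)=3a(n-1)-a(n-2)$ with $a(1)=1$ and $a(2)=2$. That recurrence and those initial values characterize $F_{2n-1}$.

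\textbf{The case $\{0101,0102\}$.} Avoiding $101$ and $102$ means that once a letter $b$ has been followed by a smaller letter $a<b$, no later letter may exceed $a$. A later letter equal to $b$ would give $101$, and any larger letter would give $102$. So an RGF $x$ avoiding both patterns is a weakly increasing RGF prefix $0^{*}1^{*}\cdots m^{*}$, possibly followed, after its first descent to some value $a<m$, by a suffix with all letters at most $a$. Iterating this, each later descent lowers the ceiling again. I would label a sequence by its current ceiling, which is the largest value $t$ that can still be appended. Before any descent, the admissible letters are $0,\dots,m+1$. After a descent to $a$, the admissible letters are $0,\dots,a$, and appending $c<a$ lowers the ceiling to $c$.

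This gives two kinds of labels:
\begin{itemize}
\item an increasing prefix with maximum $m$, from which one may append $m$ or $m+1$ (staying increasing) or any $c<m$ (moving to ceiling $c$);
\item a ceiling state $(c)$, whose children are $(c)$ itself (appending $c$) and $(c')$ for each $c'<c$.
\end{itemize}
A ceiling state with $c$ has exactly the weak-decrease continuations, which is a finite count. I would then sum over the position and value of the first descent to get a generating function, and verify that it equals $\frac{x(1-x)}{1-3x+x^2}$. A faster alternative is a direct bijection, or a recurrence obtained by deleting the last letter.

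\textbf{The case $\{0101,0121\}$.} Here I would argue analogously. Avoiding $101$ and $021$ in an RGF means that every new maximum may be inserted freely, but after the letter $m$ is used, any later smaller letter must not be followed by anything in between. I would set up the corresponding label (for example the current maximum together with the last letter) and show that the succession rules match those of the first case. This gives the Wilf equivalence without a separate computation.

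\textbf{Main obstacle.} The hardest step is finding labels whose succession rules close up into a two-term linear recurrence. My first attempt would be to count by the last letter and the maximum, derive a bivariate recurrence, and check the values $1,2,5,13,34$ numerically before proving the pattern by induction.
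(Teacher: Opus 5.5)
Your treatment of $\{0101,0102\}$ matches the paper's. Your increasing-prefix and ceiling labels, with their children, are exactly the rules $(0,m)\rightsquigarrow(0,m)(0,m{+}1)(1,0)\cdots(1,m{-}1)$ and $(1,\ell)\rightsquigarrow(1,0)\cdots(1,\ell)$. There are two small holes:
\begin{itemize}
\item Your reason why nothing after $a$ may exceed $a$ only covers later letters $c\ge b$. For $a<c<b$ you need the RGF property: the first $c$ precedes the first $b$, so $c\,a\,c$ is a $101$. This is how the paper argues.
\item The generating function is only announced, not derived. The paper solves the system with a bivariate series and the kernel method at $u=1/(1-x)$. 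Your sum over the first descent would also work: take $\binom{k-1}{m}$ increasing prefixes of length $k$ and maximum $m$, times $\binom{n-k-1+c}{c}$ weakly decreasing tails bounded by $c$. But you have to carry it out.
\end{itemize}

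The genuine gap is the case $\{0101,0121\}$. You propose to choose labels so that its succession rules coincide with those of the first class. That is impossible. The generating tree is determined by the class itself (the children of $x$ are its admissible one-letter extensions), and the two trees are not isomorphic. At level $3$:
\begin{itemize}
\item In the $\{0101,0102\}$ tree, $010$ has one child and $012$ has four ($0120,0121,0122,0123$).
\item In the $\{0101,0121\}$ tree, $010$ has two children ($0100,0102$) and $012$ has three ($0120,0122,0123$).
\end{itemize}
So the multisets of child counts, $\{1,2,3,3,4\}$ versus $\{2,2,3,3,3\}$, already differ. More generally, $012\cdots m$ has $m+2$ children in the first tree, while no node has more than three in the second. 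The equality of the two counts is numerical, not structural, so a separate computation is required.

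The idea you are missing is not captured by your description ``any later smaller letter must not be followed by anything in between.'' It is this: let $x$ be a $\{101,021\}$-avoiding RGF with maximum $m\ge1$. Appending $t$ with $1\le t\le m-1$ creates $0\,t\,m\,t$, an occurrence of $0121$. Appending $m$ after a final $0$ creates $m\,0\,m$, an occurrence of $101$. Hence only $0$, $m$ (when $x$ ends in $m$), and $m+1$ can be appended. This gives the paper's tree $(0,0)\rightsquigarrow(0,0)(1,1)$, $(1,m)\rightsquigarrow(0,m)(1,m)(1,m{+}1)$, $(0,m)\rightsquigarrow(0,m)(1,m{+}1)$. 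Equivalently, the sequences are exactly $0^{i_0}1^{a_1}0^{i_1}2^{a_2}0^{i_2}\cdots m^{a_m}0^{i_m}$ with $i_0,a_j\ge1$ and $i_j\ge0$. Their generating function is $\frac{x}{1-x}\cdot\frac{1}{1-x/(1-x)^2}=\frac{x(1-x)}{1-3x+x^2}$, which finishes the proof at once.
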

\begin{proof} 
We derive the formulas for $a_{\{0101,0102\}}(n)$ and $a_{\{0101,0121\}}(n)$ separately. 

\noindent
{\bf Enumeration of $\mathcal{A}_n(\{0101,0102\})$}. 
By Lemmas~\ref{RGF-lem} and~\ref{RGF-lem-wilf}, it suffices to enumerate $\mathcal{A}_n(\{101,102\})$. 
To construct the generating tree, we assign each node $x$ a label of the form $(0, m)$ or $(1, \ell)$. Here, $(0,m)$ indicates that $x$ has no descents and $\max(x) = m$, while $(1,\ell)$ indicates that $x$ has at least one descent and ends in $\ell$. The root is the sequence $0$ with label $(0,0)$. The succession rules are as follows:
\[
\begin{aligned}
(0,m) & \rightsquigarrow (0,m)\ (0,m{+}1)\ (1,0)\cdots(1,m{-}1) \\
(1,\ell) & \rightsquigarrow (1,0)\cdots(1,\ell)
\end{aligned}
\]
We now justify the succession rules.

\noindent\emph{Justification of $(0,m)\rightsquigarrow (0,m)\ (0,m{+}1)\ (1,0)\cdots(1,m{-}1)$.}
Let $x$ have label $(0,m)$. By definition, $x$ has no descents and satisfies $\max(x)=m$. Since $x\mapsto xt$, we have $0\le t\le m+1$. If $0\le t\le m-1$, then $xt$ ends with the descent $mt$, and hence the child has label $(1,t)$. If $t=m$, then no descent is created and $\max(xt)=m$, so the child has label $(0,m)$. If $t=m+1$, then no descent is created and $\max(xt)=m+1$, so the child has label $(0,m+1)$.

\smallskip
\noindent\emph{Justification of $(1,\ell)\rightsquigarrow (1,0)\cdots(1,\ell)$.}
Let $x$ have label $(1,\ell)$, and let $ba$, with $b>a$, be the first (i.e., leftmost) descent in $x$. We claim that the suffix of $x$ beginning with $b$ is weakly decreasing.

To prove this, it is enough to show that no letter greater than $a$ can occur to the right of $a$. Suppose, for contradiction, that some letter $c>a$ does occur to the right of $a$. If $c>b$, then $bac$ forms an occurrence of $102$. On the other hand, if $a<c\le b$, then, since $ba$ is the first descent, the prefix ending at $b$ is weakly increasing. Because $x$ is an RGF, the letter $c$ must already occur to the left of $b$, and thus $x$ contains a subsequence $cac$, which forms an occurrence of $101$. In either case we obtain a contradiction. Therefore, every letter to the right of $a$ is at most $a$.

Repeating the same argument from left to right along the suffix beginning with $b$, we conclude inductively that this suffix is weakly decreasing. Since $x$ ends with $\ell$, any extension $x\mapsto xt$ must satisfy $t\in\{0,1,\dots,\ell\}$, and hence yields a child labeled $(1,t)$.

\smallskip
For $m\ge 0$ and $\ell\ge 0$, let
\[
A_m(x)=\sum_{n\ge 1} a_{m,n}x^n
\qquad\text{and}\qquad
B_\ell(x)=\sum_{n\ge 1} b_{\ell,n}x^n,
\]
where $a_{m,n}$ (resp.,  $b_{\ell,n}$) is the number of nodes at level $n$ with label $(0,m)$ (resp., $(1,\ell)$). Here $x$ marks the length of the sequence.

We first derive $A_m(x)$. Clearly, $A_0(x)=\frac{x}{1-x}$, which counts ascent sequences consisting only of $0$'s. On the other hand, for $m\geq 1$, a node with label $(0,m)$ can only be produced by a parent with label $(0,m)$ or $(0,m-1)$. Therefore, in this case, $A_m(x)=xA_m(x)+xA_{m-1}(x)$.
It follows that
\[
A_m(x)=\frac{x}{1-x}A_{m-1}(x)=A_0(x)\left(\frac{x}{1-x}\right)^m 
=\left(\frac{x}{1-x}\right)^{m+1},
\]
which holds for $m \geq 0$.

We next determine $B_\ell(x)$. By the succession rules, a node with label $(1,\ell)$ arises either from a node labeled $(0,m)$ with $m-1\ge \ell$ or from a node labeled $(1,j)$ with $j\ge \ell$. We obtain
\begin{equation}\label{B-ell-101-102}
B_\ell(x)=x\left(\sum_{m\ge \ell+1}A_m(x)+\sum_{j\ge \ell}B_j(x)\right).
\end{equation}

To solve these equations, introduce the bivariate generating functions
\[
A(x,u)=\sum_{m\ge0}A_m(x)u^m
\qquad\text{and}\qquad
B(x,u)=\sum_{\ell\ge0}B_\ell(x)u^\ell.
\]
Using the explicit formula for $A_m(x)$, we obtain
\begin{equation}\label{A(x,u)-101-102}
A(x,u)=\sum_{m\ge0}\left(\frac{x}{1-x}\right)^{m+1}u^m=\frac{x}{1-x(1+u)}.
\end{equation}
Multiplying \eqref{B-ell-101-102} by $u^\ell$ and summing over all $\ell\ge0$ yields
\[
B(x,u)
=
x\sum_{\ell\ge0}\left(\sum_{m\ge\ell+1}A_m(x)\right)u^\ell
+
x\sum_{\ell\ge0}\left(\sum_{j\ge\ell}B_j(x)\right)u^\ell.
\]
Reversing the order of summation in both double sums gives
\[
\sum_{\ell\ge0}\left(\sum_{m\ge\ell+1}A_m(x)\right)u^\ell
=
\frac{A(x,1)-A(x,u)}{1-u}
\quad\text{and}\quad
\sum_{\ell\ge0}\left(\sum_{j\ge\ell}B_j(x)\right)u^\ell
=
\frac{B(x,1)-uB(x,u)}{1-u}.
\]
Hence
\[
B(x,u)=\frac{x}{1-u}\Bigl(A(x,1)-A(x,u)+B(x,1)-uB(x,u)\Bigr),
\]
and therefore
\[
B(x,u)=\frac{x\bigl(A(x,1)-A(x,u)+B(x,1)\bigr)}{1-u(1-x)}.
\]

It remains to determine $B(x,1)$. By \eqref{A(x,u)-101-102}, we have $A(x,1)=\frac{x}{1-2x}$. Applying the kernel method to the denominator $1-u(1-x)$, we set $u=\frac{1}{1-x}$ and obtain
\[
B(x,1)=A\!\left(x,\frac{1}{1-x}\right)-A(x,1)
=\frac{x(1-x)}{1-3x+x^2}-\frac{x}{1-2x}
=\frac{x^3}{(1-3x+x^2)(1-2x)}.
\]
Thus the ordinary generating function for all nodes in the tree is
\[
F(x):=A(x,1)+B(x,1)
=\frac{x}{1-2x}+\frac{x^3}{(1-3x+x^2)(1-2x)}
=\frac{x(1-x)}{1-3x+x^2}.
\]

It is straightforward to verify that $F(x)$ is the generating function for the odd-indexed Fibonacci numbers, which satisfy the recurrence relation $F_{2n-1}=3F_{2n-3}-F_{2n-5}$ with initial conditions $F_1=1$ and $F_3=2$. Hence, $a_{\{101,102\}}(n)=a_{\{0101,0102\}}(n)=F_{2n-1}$, completing the proof of the formula for $a_{\{0101,0102\}}(n)$.\\[-3mm]

\noindent
{\bf Enumeration of $\mathcal{A}_n(\{0101,0121\})$}. Lemmas~\ref{RGF-lem} and~\ref{RGF-lem-wilf} imply that $\mathcal{A}_n(\{0101,0121\}) = \mathcal{A}_n(\{101,021\})$. Hence, it is enough to enumerate $\mathcal{A}_n(\{101,021\})$, which we do via a generating tree. We label each node $x$ by either $(0,m)$ or $(1,m)$, where $\max(x)=m$. The label $(0,m)$ indicates that $x$ ends in $0$, while $(1,m)$ indicates that $x$ ends in $m$. In both cases, $m=\max(x)$. The root is the sequence $0^i$, labeled $(0,0)$. The succession rules are as follows:
$$
\begin{aligned}
(0,0) & \rightsquigarrow (0,0)\ (1,1) \\
(1,m) & \rightsquigarrow (0,m)\ (1,m)\ (1,m{+}1)\qquad (m\ge 1) \\
(0,m) & \rightsquigarrow (0,m)\ (1,m{+}1)\qquad \qquad \ \ \ (m\ge 1)
\end{aligned}
$$
We now justify the succession rules.

\noindent\emph{Justification of $(0,0)\rightsquigarrow (0,0)\ (1,1)$.}
Let $x$ have label $(0,0)$. Then $x=0^i$ for some $i\ge1$, so any extension $x\mapsto xt$ has $t\in\{0,1\}$. Appending $0$ (resp., $1$) yields an admissible child with label $(0,0)$ (resp., $(1,1)$).

\smallskip
\noindent\emph{Justification of the rule $(1,m)\rightsquigarrow (0,m)\ (1,m)\ (1,m{+}1)$ for $m\ge 1$.}
Let $x$ have label $(1,m)$. Then $\max(x)=m$ and $x$ ends in $m$. Since $x\mapsto xt$ and $x$ is an RGF, we have $t\in\{0,1,\dots,m+1\}$. If $m\ge2$ and $1\le t\le m-1$, then, because $x$ is an RGF, the sequence $xt$ contains a subsequence $0tmt$ forming the pattern $0121$, so such letters of $t$ are forbidden. Hence the only possible letters are $t=0,m,m+1$, which yield children with labels $(0,m)$, $(1,m)$, and $(1,m+1)$, respectively. When $m=1$, the admissible letters are again exactly $0,1,2$, and a direct check shows that none of the corresponding extensions creates an occurrence of $101$ or $0121$. Thus the same succession rule holds for all $m\ge1$.

\smallskip
\noindent\emph{Justification of $(0,m)\rightsquigarrow (0,m)\ (1,m{+}1)$ for $m\ge1$.}
Let $x$ have label $(0,m)$. Then $x$ ends in $0$ and satisfies $\max(x)=m$, so any extension $x\mapsto xt$ has $t\in\{0,1,\dots,m+1\}$. As shown in the justification of $(1,m)\rightsquigarrow (0,m)\ (1,m)\ (1,m+1)$, the letters $1\le t\le m-1$ are forbidden, since they create an occurrence of $0121$. In addition, $t=m$ is forbidden, because $xt$ contains the subsequence $m0m$, whose reduction is $101$. Therefore, the only admissible letters are $t=0$ and $t=m+1$, which yield children with labels $(0,m)$ and $(1,m+1)$, respectively.

Define
\[
C_m(x)=\sum_{n\ge 1}c_{m,n}x^n \quad (m\ge 1),
\qquad
D_m(x)=\sum_{n\ge 1}d_{m,n}x^n \quad (m\ge 0),
\]
where $c_{m,n}$ (resp., $d_{m,n}$) is the number of nodes at level $n$ with label $(0,m)$ (resp., $(1,m)$). 

We first find $D_0(x)$. Since the root has label $(1,0)$ and every node with label $(1,0)$ produces exactly one child with the same label, we have $D_0(x)=x+xD_0(x)$, and hence, $D_0(x)=\frac{x}{1-x}$.

Next we determine $C_m(x)$ for $m\ge 1$. A node with label $(0,m)$ can only arise from a parent with label $(0,m)$ or $(1,m)$, and each such parent contributes exactly one child of label $(0,m)$. Therefore $C_m(x)=xC_m(x)+xD_m(x)$, so that
\begin{equation}\label{C_m-101-0121}
C_m(x)=\frac{x}{1-x}D_m(x).
\end{equation}

We now determine $D_m(x)$ for $m\ge 1$. For $m=1$, a node with label $(1,1)$ arises either from a parent with label $(1,0)$ or from a parent with label $(1,1)$. Hence $D_1(x)=xD_0(x)+xD_1(x)$, which gives
\begin{equation}\label{D_1-101-0121}
D_1(x)=\frac{x}{1-x}D_0(x)=\frac{x^2}{(1-x)^2}.
\end{equation}

Now let $m\ge 2$. A node with label $(1,m)$ may arise in exactly three ways: from a parent with label $(1,m)$, from a parent with label $(1,m-1)$, or from a parent with label $(0,m-1)$. Therefore, $D_m(x)=xD_m(x)+xD_{m-1}(x)+xC_{m-1}(x)$.
Rearranging and using (\ref{C_m-101-0121}), we obtain $(1-x)D_m(x)=x\bigl(D_{m-1}(x)+C_{m-1}(x)\bigr)
=\frac{x}{1-x}D_{m-1}(x)$, and thus, for $m\geq 2$, using induction and the initial condition~\eqref{D_1-101-0121}, we have
\[
D_m(x)=\frac{x}{(1-x)^2}D_{m-1}(x)=\frac{x^{m+1}}{(1-x)^{2m}}.
\]
It then follows from (\ref{C_m-101-0121}) that, for $m\geq 1$,
\[
C_m(x)=\frac{x^{m+2}}{(1-x)^{2m+1}}.
\]\

Summing over all labels gives the ordinary generating function for the entire tree:
\[
\sum_{m\ge 1}C_m(x)+\sum_{m\ge 0}D_m(x)
=\frac{x^3}{(1-x)(1-3x+x^2)}+\left(\frac{x}{1-x}
+
\frac{x^2}{1-3x+x^2}\right)
=
\frac{x(1-x)}{1-3x+x^2}.
\]
This is again the generating function for the odd-indexed Fibonacci numbers. Hence
\[
a_{\{0101,0121\}}(n)=a_{\{101,0121\}}(n)=F_{2n-1}.
\]
Our proof of Theorem~\ref{thm:L2-2} is complete. \end{proof}

\subsection{Pairs \texorpdfstring{$\{0101,0112\}$, $\{0102,0112\}$, $\{0102,0120\}$, and $\{0112,0121\}$}{{0101,0112}, {0102,0112}, {0102,0120}, and {0112,0121}}}

\begin{thm}\label{thm:L2-4}
For $n\ge 1$, we have
\begin{equation}\label{eq:L2-4}
a_{\{0101,0112\}}(n)=a_{\{0102,0112\}}(n)=a_{\{0121,0112\}}(n)=a_{\{0102,0120\}}(n)=(n-1)2^{\,n-2}+1.
\end{equation}
\end{thm}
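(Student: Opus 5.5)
By Lemma~\ref{RGF-lem}, every sequence in each of the four classes is an RGF, since each pair contains $0101$, $0102$ or $0112$. So I would work only with RGFs and use Lemma~\ref{RGF-lem-wilf} to replace $0101,0102,0120,0121$ by $101,102,120,021$. For $0112$ I would use the analogous RGF observation: an RGF contains $0112$ if and only if it contains letters $b,b,c$ in this order with $0<b<c$. The reason is that a $0$ precedes the first $b$ automatically, which supplies the leading $0$ of the pattern.

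The target is
\[
\sum_{n\ge1}\bigl((n-1)2^{n-2}+1\bigr)x^n=\frac{x}{1-x}+\frac{x^2}{(1-2x)^2},
\]
and for each pair I would show that the generating function of the generating tree equals this. The summand $x/(1-x)$ counts the sequences $0^n$. The summand $x^2/(1-2x)^2$ will be the contribution of sequences with $\max(x)\ge1$, and I expect each proof to produce it naturally.

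First I would derive a structural description for each pair, in the style of the proof of Theorem~\ref{thm:L2-2}.
\begin{itemize}
\item For $\{0102,0112\}$, i.e.\ $\{102\}$ plus the $0112$ condition: a nonzero letter $b$ can be repeated only if no larger letter appears afterwards. Avoiding $102$ forces every letter after a descent $ba$ to be at most $b$. I would label nodes by $(m,\ell,\varepsilon)$, where $m=\max(x)$, $\ell$ is the last letter and $\varepsilon$ records whether a new maximum is still allowed.
\item For $\{0101,0112\}$: avoiding $101$ means that once the sequence drops below $b$, the letter $b$ never reappears. I would label by the current maximum together with whether the last letter equals the maximum.
\item For $\{0112,0121\}$: avoiding $021$ in an RGF means that after any letter $c$, every later letter is either $\ge c$ or is the minimum among the letters from the point of entry. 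Together with $0112$, this forces a shape like $0\,0^{*}1\,2\cdots m$ interspersed with $0$s, followed by a tail of repeated $m$s and $0$s.
\item For $\{0102,0120\}$, i.e.\ $\{102,120\}$: once a descent occurs, all later letters lie in a restricted set. Before the first descent the sequence is weakly increasing.
\end{itemize}

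For each pair I would then write succession rules and justify them by the same forbidden-subsequence arguments as in Theorem~\ref{thm:L2-2}. I would set up generating functions $A_m(x)$, $B_\ell(x)$, take bivariate sums, and use the kernel method where a catalytic variable appears. Where the structure is explicit enough, I would count directly instead. Typically I would fix the weakly increasing prefix, which is a composition-like object contributing $(x/(1-x))^{m+1}$, and then count the admissible tails, which should give the factor producing $(1-2x)^{-2}$.

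The main obstacle I expect is the pair $\{0102,0120\}$ and possibly $\{0112,0121\}$. There the tail after the first descent is constrained by two interacting conditions, so a single-parameter label may not close. I would first try labels recording $(\max,\text{last letter})$ and check the small values $1,2,5,13,33$ by hand. If the rules do not close, I would add a flag marking whether a letter strictly between $0$ and the maximum has already appeared after the descent. Finally, I would confirm that all four generating functions simplify to $x/(1-x)+x^2/(1-2x)^2$, which gives the Wilf equivalence in~\eqref{eq:L2-4}.
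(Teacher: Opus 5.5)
You correctly reduce everything to RGFs. Lemma~\ref{RGF-lem-wilf} turns $0101,0102,0120,0121$ into $101,102,120,021$, and an RGF contains $0112$ exactly when it contains $bbc$ with $0<b<c$. But the proposal has a genuine gap: it stops there. For none of the four pairs do you give succession rules, a decomposition, or a generating function. The identity with $\frac{x}{1-x}+\frac{x^2}{(1-2x)^2}$, which is the whole content of the theorem, is only something you ``expect''.

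Moreover, the concrete labels you commit to do not give closed generating trees.
\begin{itemize}
\item For $\{0101,0112\}$, recording the maximum and whether the last letter equals it gives $01$ and $011$ the same label. Yet $01$ has three children and $011$ only two, since $0112$ is forbidden.
\item In the same class, $010$ and $0110$ share a label. But $010$ has the children $0100,0102$, whereas $0110$ has only $01100$ ($01101$ contains $101$ and $01102$ contains $0112$). You must at least record whether a positive letter has been repeated.
\item Your picture of a ``weakly increasing prefix contributing $(x/(1-x))^{m+1}$'' does not fit this class either: members such as $0102030$ return to $0$ between new maxima.
\item For $\{0102,0120\}$, the label (maximum, last letter) identifies $012$, which has three children, with $01212$, which has two ($012120$ contains $120$ and $012123$ contains $102$).
\item The repair flag you propose does not separate $01$ (children $010,011,012$) from $0101$ (children $01010,01011$). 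What has to be recorded is simply whether a descent has occurred.
\end{itemize}

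What is missing is an explicit structure for each class, followed by an actual count.

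For $\{102,120\}$ this is quick; the paper just cites Baxter and Pudwell. A sequence is a weakly increasing RGF $0^{i_0}1^{i_1}\cdots m^{i_m}$, optionally (for $m\ge1$) followed by the letter $m-1$ and then an arbitrary word over $\{m-1,m\}$. The reason is that $120$ forbids letters below $m-1$ after an $m$, and $102$ forbids letters above $m$ after the descent. Summing over the prefix length $k$ gives $2^{n-1}+\sum_{k=1}^{n-1}(2^{k-1}-1)2^{n-1-k}=(n-1)2^{n-2}+1$. So this pair is the easiest case, not the main obstacle.

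For $\{0112,0121\}$ your shape description is right. It leads directly to the paper's tree with labels $(0),(01),(011)$ and rules $(0)\rightsquigarrow(0)(01)$, $(01)\rightsquigarrow(01)(011)(01)$, $(011)\rightsquigarrow(011)(011)$. The level counts $1$, $2^{n-1}-1$, $(n-3)2^{n-2}+1$ sum to the target.

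For $\{0102,0112\}$ your $(m,\ell,\varepsilon)$ labels do close. Once new maxima are disallowed, a last letter $\ell\ge1$ yields children ending in $0,\dots,\ell$, and a last letter $0$ yields children ending in $0,1$. The paper instead counts a block $\beta=0^i12\cdots a\,a^{i_1}\cdots1^{i_a}$ followed by any binary word beginning with $0$. For $\{0101,0112\}$ it uses the decomposition $\alpha_1\cdots\alpha_r\beta$.

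Until such descriptions are established and actually counted for all four pairs, neither the formula nor the Wilf equivalence is proved.
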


\begin{proof}
We consider each pair of patterns separately.

\noindent
{\bf Enumeration of $\mathcal{A}_n(\{0101,0112\})$}. By Theorem~2.9 in \cite{DuncanSteingrimsson2011}, an ascent sequence avoids $0112$ if and only if it consists of a strictly increasing sequence $012\cdots k$ followed by a weakly decreasing sequence, with an arbitrary number of $0$'s inserted in arbitrary positions. Moreover, by Theorem~2.6 in \cite{DuncanSteingrimsson2011}, ascent sequences avoiding $0101$ are precisely the RGFs corresponding to noncrossing set partitions. Combining these two characterizations, every sequence in $\mathcal A_n(\{0101,0112\})$
other than the all-zero sequence $0\cdots 0$ can be written in the form $x=\alpha_1\alpha_2\cdots \alpha_r\beta$, where $r\ge 0$ and the following conditions hold:
\begin{itemize}
\item For each $j=1,\dots,r$, the block $\alpha_j$ contains both $0$ and at least one nonzero entry, and the sequence obtained from $\alpha_j$ by deleting all $0$'s is strictly increasing.
\item The final block $\beta$ is nonempty and contains at least one nonzero entry. In particular, $\beta$ contains the subsequence $01$. Moreover, $\beta$ is of the form
\begin{equation}\label{describtion-beta}
0^i 12\cdots a\, a^{i_1}(a-1)^{i_2}\cdots 1^{i_a}0^{i_{a+1}},
\end{equation}
where $i\ge 1$, $a\ge 1$, and $i_p\ge 0$ for all $p$. 
\end{itemize}

We now derive the generating function for the class $\mathcal{A}_n(\{0101,0112\})$. We begin by enumerating all sequences other than the all-zero sequence $0\cdots 0$. First, $\alpha_j$contains a positive number of $0$'s and, after deleting all $0$'s, leaves a nonempty strictly increasing sequence. Therefore its generating function is
\begin{equation}\label{eq:A0112-A}
A(x):=\left(\frac{x}{1-x}\right)^2.
\end{equation}
 
To construct $\beta$, we first choose the initial string of $0$'s. Since $\beta$ begins with at least one $0$, this contributes $\sum_{p\ge 1}x^p=\frac{x}{1-x}$. Next, if the initial strictly increasing part has length $a\ge 1$, then it contributes a factor $x^a$. Once this part is fixed, the subsequent weakly decreasing part may use the letters $0,1,\dots,a$, each with arbitrary multiplicity, and hence contributes $\left(\frac{1}{1-x}\right)^{a+1}$. Summing over all $a\ge 1$, we obtain
\begin{equation}\label{eq:A0112-B}
B(x):=\left(\frac{x}{1-x}\right)\left(\sum_{a\ge 1}x^a\left(\frac{1}{1-x}\right)^{a+1}\right)=\frac{1}{1-2x}\left(\frac{x}{1-x}\right)^2.
\end{equation}

It follows that the generating function for all sequences in $\mathcal A_n(\{0101,0112\})$ other than the all-zero sequences $0\cdots 0$ is
\begin{equation}\label{eq:A0112-nonzero}
\sum_{r\ge 0}A(x)^rB(x)=\frac{B(x)}{1-A(x)}=\frac{x^2}{(1-2x)^2}.
\end{equation}
The all-zero sequences $0\cdots 0$ contribute $\sum_{j\ge 1}x^j=\frac{x}{1-x}$. Hence, the generating function for $\mathcal{A}_n(\{0101,0112\})$ is
\begin{equation}\label{F(x)-0101-0112}
F(x)=\frac{x}{1-x}+\frac{x^2}{(1-2x)^2}
=\sum_{n\ge 1} x^n+\sum_{n\ge 2}(n-1)2^{n-2}x^n,    
\end{equation}
giving, for $n\geq 1$, $a_{\{0101,0112\}}(n)=1+(n-1)2^{n-2}$.\\[-3mm]
 
\noindent
{\bf Enumeration of $\mathcal{A}_n(\{0102,0112\})$.} We use the structural description of ascent sequences avoiding $0112$ established above, and then impose the additional avoidance condition for $0102$.

Every sequence in $\mathcal{A}_n(\{0102,0112\})$ other than the all-zero sequence $0\cdots 0$ can be written in the form $x=\beta\gamma_1\cdots\gamma_r$,
where $r\ge 0$, the initial block $\beta$ is of the form
\begin{equation}\label{eq:beta-0102-0112}
0^i12\cdots a\, a^{i_1}(a-1)^{i_2}\cdots 1^{i_a},
\end{equation}
with $i\ge 1$, $a\ge 1$, and $i_s\ge 0$ for all $s$, and each $\gamma_j$ is of the form
$\gamma_j=0^{p_j}1^{q_j}$ with $p_j\ge 1,\ q_j\ge 0$. Thus $\beta$ has the same general form as in~\eqref{describtion-beta}, except that its weakly decreasing part contains no $0$'s.

The generating function for the block $\beta$ may be obtained in the same way as in~\eqref{eq:A0112-B}. Since the initial string of $0$'s has positive length, it contributes $\frac{x}{1-x}$. If the initial strictly increasing part has length $a\ge 1$, then it contributes a factor $x^a$, while the subsequent weakly decreasing part may use only the letters $1,2,\dots,a$, each with arbitrary multiplicity. Hence
\begin{equation}\label{eq:B0112-B}
B(x):=\left(\frac{x}{1-x}\right)\left(\sum_{a\ge 1}x^a\left(\frac{x}{1-x}\right)^{a}\right)=\frac{x^2}{(1-x)(1-2x)}.
\end{equation}

It remains to determine the contribution of the suffix $\gamma_1\cdots\gamma_r$. There are four cases.

\smallskip
\noindent
{\it Case 1.} Every block $\gamma_j$ contains both $0$'s and $1$'s, so each block contributes $\left(\frac{x}{1-x}\right)^2,$
and
\begin{equation}\label{eq:G1-0102-0112}
G_1(x)
=\sum_{j\ge 1}\left(\left(\frac{x}{1-x}\right)^2\right)^j
=\frac{\left(\frac{x}{1-x}\right)^2}{1-\left(\frac{x}{1-x}\right)^2}
=\frac{x^2}{1-2x}.
\end{equation}

\smallskip
\noindent
{\it Case 2.} In this case, the last block $\gamma_r$ is a nonempty zero block, and for each $j=1,2,\dots,r-1$, the block $\gamma_j$ contains both $0$'s and $1$'s.
Hence  $\gamma_r$ contributes $\frac{x}{1-x}$, while the other block $\gamma_j$ contributes $\left(\frac{x}{1-x}\right)^2$. Thus
\begin{equation}\label{eq:G2-0102-0112}
G_2(x)
=\frac{x}{1-x}\sum_{j\ge 1}\left(\left(\frac{x}{1-x}\right)^2\right)^j
=\frac{x^3}{(1-x)(1-2x)}.
\end{equation}

\smallskip
\noindent
{\it Case 3.} The suffix is empty, that is, $r=0$ and the sequence consists only of the block $\beta$. Hence
\begin{equation}\label{eq:G3-0102-0112}
G_3(x)=1.
\end{equation}

\smallskip
\noindent
{\it Case 4.} The suffix consists of a single nonempty zero block. Hence
\begin{equation}\label{eq:G4-0102-0112}
G_4(x)=\frac{x}{1-x}.
\end{equation}

Finally, the all-zero sequences contribute $\frac{x}{1-x}$. Substituting \eqref{eq:B0112-B}--\eqref{eq:G4-0102-0112} into
\[
F(x)=B(x)\bigl(G_1(x)+G_2(x)+G_3(x)+G_4(x)\bigr)+\frac{x}{1-x},
\]
we obtain the same generating function as in \eqref{F(x)-0101-0112}. This shows that, for $n \geq 1$,
$a_{\{0102,0112\}}(n) = 1 + (n-1)2^{n-2}$.

\noindent
{\bf Enumeration of $\mathcal{A}_n(\{0121,0112\})$.}

\begin{lemma}\label{lem:0121-0112-tree}
For a sequence $x$, let $x^+$ denote the subsequence of its positive entries. We classify sequences in $\mathcal A_n(\{0121,0112\})$ according to the form of $x^+$. For each $x\in \mathcal A_n(\{0121,0112\})$, we assign a label as follows:
\[
\begin{aligned}
(0):\; & x=0^i; \\
(01):\; & x^+=12\,\cdots\,m, \text{ where }m \ge 1;\\
(011):\; & x^+=12\,\cdots\,(m-1)\,m^s, \text{ where }m \ge 1 \text{ and }s \ge 2.
\end{aligned}
\]
The generating tree of $\mathcal{A}_n(\{0121,0112\})$ with root $(0)$ and the following succession rules:
\[
\begin{aligned}
(0) &\rightsquigarrow (0)(01)\\
(01) &\rightsquigarrow (01)(011)(01)\\
(011) &\rightsquigarrow (011)(011)
\end{aligned}
\]
\end{lemma}

\begin{proof}
By Lemma~\ref{RGF-lem}, every sequence in $\mathcal{A}_n(\{0121,0112\})$ is an RGF, so if $m=\max(x)\ge1$, then the first occurrences of its positive letters are $1,2,\dots,m$ in order. Since avoiding $0121$ forbids any smaller positive letter after the first $m$, and avoiding $0112$ forbids any repeated positive letter before it, we obtain
\[
x^+=12\cdots m
\quad\text{or}\quad
x^+=12\cdots (m-1)m^s
\]
for some $s\ge2$.
We now justify the succession rules.

\medskip
\noindent\emph{Justification of $(0)\rightsquigarrow (0)(01)$.}
Let $x=0^i$ for some $i\ge1$. Since $x\mapsto xt$, we have $t\in\{0,1\}$. Appending $0$ (resp., $1$) yields a child with label $(0)$ (resp., $(01)$).

\medskip
\noindent\emph{Justification of $(01)\rightsquigarrow (01)(011)(01)$.}
Let $x$ have label $(01)$, so that $x^+=12\cdots m$. Since $x\mapsto xt$, we have $t\in\{0,1,\dots,m+1\}$. The letters $1\le t\le m-1$ are forbidden because $xt$ contains the subsequence $0tmt$, which forms the pattern $0121$. Hence only the letters $0$, $m$, and $m+1$ are admissible, yielding children with labels $(01)$, $(011)$, and $(01)$, respectively.

\medskip
\noindent\emph{Justification of $(011)\rightsquigarrow (011)(011)$.}
Let $x$ have label $(011)$, so that $x^+=12\cdots (m-1)m^s$ for some $s\ge2$. Since $x\mapsto xt$, we have $t\in\{0,1,\dots,m+1\}$. If $1\le t\le m-1$, then $xt$ contains $0121$, witnessed by the subsequence $0tmt$. If $t=m+1$, then $xt$ contains $0112$, witnessed by $(m-1)mm(m+1)$. Thus the only admissible letters are $t=0$ and $t=m$. In either case, the form of $x^+$ is preserved, and hence the child again has label $(011)$.

Let $z_n$, $a_n$, and $b_n$ denote the number of nodes at level $n$ with labels $(0)$, $(01)$, and $(011)$, respectively. From the generating tree, we obtain the recurrences
\[
z_n = 1,
\qquad
a_{n+1} = z_n + 2a_n,
\qquad
b_{n+1} = a_n + 2b_n,
\]
with initial values $z_1 = 1$, $a_1 = 0$, and $b_1 = 0$. Solving, for $n \geq 2$, we obtain 
\[
a_n = 2^{n-1} - 1,
\qquad
b_n = (n-3)2^{n-2} + 1.
\]
Hence, $a_{\{0121,0112\}}(n) = z_n + a_n + b_n= (n-1)2^{n-2} + 1.$
\end{proof}

\noindent
{\bf Enumeration of $\mathcal{A}_n(\{0102,0120\})$}. 
By Lemmas~\ref{RGF-lem} and~\ref{RGF-lem-wilf}, we have $\mathcal{A}_n(\{0102,0120\})=\mathcal{A}_n(\{102,120\})$. Hence, it suffices to enumerate $\mathcal{A}_n(\{102,120\})$, which was done in \cite{BaxterPudwell2015}. Therefore, $a_{\{0102,0120\}}(n)=(n-1)2^{\,n-2}+1$.
This completes the proof of Theorem~\ref{thm:L2-4}.
\end{proof}

\subsection{Pair \texorpdfstring{$\{0101,0120\}$}{{0101,0120}}}

\begin{thm}\label{thm:0101-0120}
For $n\ge1$, we have $a_{\{0101,0120\}}(n)=a_{\{101,120\}}(n)=|S_n(231,4123)|$. The respective generating function is $$\dfrac{(1-x)^3}{1-4x+5x^2-3x^3}.$$
\end{thm}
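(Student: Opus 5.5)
The first equality follows from the lemmas of Section~\ref{sec:prelim}. Since $0101$ is a subpattern of $01012$, Lemma~\ref{RGF-lem} shows that every element of $\mathcal{A}_n(\{0101,0120\})$ is an RGF. Lemma~\ref{RGF-lem-wilf} then shows that, for RGFs, avoiding $\{0101,0120\}$ is the same as avoiding $\{101,120\}$. Hence $\mathcal{A}_n(\{0101,0120\})=\mathcal{A}_n(\{101,120\})$. Conversely, every RGF is an ascent sequence: each new maximum $k$ is immediately preceded by an ascent into it, so the ascent condition holds. So it suffices to count RGFs of length $n$ avoiding $101$ and $120$. The equality with $|S_n(231,4123)|$ I will obtain at the end by comparing generating functions. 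The generating function for $S_n(231,4123)$ is known from the literature on restricted permutations (OEIS A116703), and I will simply check that it equals $\frac{(1-x)^3}{1-4x+5x^2-3x^3}$.

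For the count I will build a generating tree, in the spirit of the proof of Theorem~\ref{thm:L2-2}. First I describe which letters may be appended to an admissible RGF $x$ with $\max(x)=m$.
\begin{itemize}
\item Avoiding $120$ means that once an ascent $a<b$ (not necessarily adjacent) has occurred, no letter smaller than $a$ may follow. So if $L$ denotes the largest letter that is followed somewhere later by a larger letter, every appended $t$ must satisfy $t\ge L$.
\item Avoiding $101$ means that a letter $b$ that has been followed by some smaller letter can never reappear.
\end{itemize}
Combining the two, the admissible letters form a short list: $m+1$, the last letter, $L$, and possibly the letters lying between $L$ and the last letter that have not yet been ``closed'' by a smaller letter. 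I will show that the set of admissible letters is always of the form $\{m+1\}$ together with a weakly decreasing run of still-open letters in an interval $[L,\text{last}]$. This suggests labelling a node by the number $k$ of admissible non-new letters, possibly together with a flag recording whether $x$ is still weakly increasing. Recording the flag matters because, before any descent, $L$ is the second-largest value and many letters remain open.

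Next I will justify the succession rules by cases on the appended letter $t$: $t=m+1$, $t$ equal to the last letter, and $t$ strictly between $L$ and the last letter. The last case closes every open letter above $t$. For each case I will check, as in the earlier proofs, that no occurrence of $101$ or $120$ is created. I expect rules of the shape $(k)\rightsquigarrow(2)\,(1)(2)\cdots(k)$, modified by the flag. Then I will translate the rules into functional equations for $G(x,u)=\sum_k G_k(x)u^k$ and solve them by the kernel method, as was done for $B(x,u)$ in Theorem~\ref{thm:L2-2}.

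The main obstacle is getting the label set and the succession rules exactly right. Closed letters and the moving threshold $L$ interact: appending $m+1$ raises $L$ to the previous last letter, and this changes which open letters survive. As a safeguard I will generate the tree by hand for $n\le 6$ and compare it with $1,2,5,13,33,82$. If a one-parameter label fails, my first fallback is a first-return decomposition, writing $x=0^{i}\,1\,y$ and decomposing $y$ according to the position of the last $1$, so as to reach a rational generating function directly.
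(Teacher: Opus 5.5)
\textbf{Verdict: there is a genuine gap.} Your reduction to $\{101,120\}$-avoiding RGFs via Lemmas~\ref{RGF-lem} and~\ref{RGF-lem-wilf} is exactly the paper's argument. The paper then simply cites Baxter and Pudwell~\cite{BaxterPudwell2015} for the count of $\mathcal{A}_n(\{101,120\})$ and its link to $S_n(231,4123)$.

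The gap is in the part you set out to prove yourself. No label set or succession rule is actually established, and the one concrete claim you make about how $L$ evolves is false. Appending $m+1$ does not raise $L$ to the previous last letter. It raises $L$ to the old maximum $m$, because $m$ is now followed by the larger letter $m+1$. Under your version, after $010\mapsto 0102$ the previous last letter $0$ (which is open) would still be admissible, yet $01020$ contains $120$. So a tree built on your rule already overcounts at $n=5$. Your anticipated shape $(k)\rightsquigarrow(2)(1)(2)\cdots(k)$ with unbounded $k$, solved by the kernel method, is also aimed at a structure that never occurs.

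\textbf{The missing observation.} Let $\ell$ be the last letter of a nonconstant $x$. Then $L\in\{\ell-1,\ell\}$:
\begin{itemize}
\item an occurrence of $\ell-1$ precedes the final $\ell$ (RGF property), so $L\ge\ell-1$;
\item $L\le\ell$, since otherwise $L$, a larger letter after it, and the final $\ell$ form $120$.
\end{itemize}
More precisely:
\begin{itemize}
\item if $\ell<m$, the first $\ell$ precedes an $\ell+1$, so $L=\ell$;
\item if $\ell=m\ge1$, then $L=m-1$;
\item every letter in $(\ell,m]$ is closed, since it is followed by the final $\ell$;
\item $\ell$ itself is always open, since otherwise the sequence contains $101$.
\end{itemize}
Hence the admissible letters are $\ell$ and $m+1$, together with $m-1$ exactly when $\ell=m$ and $m-1$ is open.

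\textbf{The resulting tree.} Three labels suffice: $A$ ($\ell=m$ and $m-1$ not admissible, including the root $0$), $B$ ($\ell=m$ and $m-1$ admissible), and $C$ ($\ell<m$). Appending $m+1$ makes the old maximum admissible iff it was open, i.e.\ iff $\ell=m$. Appending $m-1$ from $B$ gives $\ell<m$. So the rules are $A\rightsquigarrow AB$, $B\rightsquigarrow BBC$, $C\rightsquigarrow CA$. These give $a_{n+1}=a_n+c_n$, $b_{n+1}=a_n+2b_n$, $c_{n+1}=b_n+c_n$ with $(a_1,b_1,c_1)=(1,0,0)$. This yields $1,2,5,13,33,82,\dots$ and the total generating function $\frac{x-2x^2+2x^3}{1-4x+5x^2-3x^3}$. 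That is the stated function minus its constant term $1$, since the stated form also counts the empty sequence.

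With this, your route becomes a self-contained alternative to the paper's citation for the count. The identification with $|S_n(231,4123)|$ still rests on the literature, as in the paper.
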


\begin{proof}
By Lemmas~\ref{RGF-lem} and~\ref{RGF-lem-wilf}, we have $\mathcal{A}_n(\{0101,0120\})=\mathcal{A}_n(\{101,120\})$. Hence, it suffices to enumerate $\mathcal{A}_n(\{101,120\})$, which was done in \cite{BaxterPudwell2015}. This completes the proof of Theorem~\ref{thm:0101-0120}. \end{proof}

\subsection{Pair \texorpdfstring{$\{0102,0121\}$}{{0102,0121}}}
\begin{thm}\label{thm:0102-0121}
For $n\ge 1$, we have $a_{\{0102,0121\}}(n)
=
\frac12\left(3\cdot 2^n-n^2-n-2\right)$.
\end{thm}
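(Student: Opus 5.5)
By Lemmas~\ref{RGF-lem} and~\ref{RGF-lem-wilf}, $\mathcal{A}_n(\{0102,0121\})=\mathcal{A}_n(\{102,021\})$, and every sequence in it is an RGF. Rather than use a generating tree, I will give a direct structural description of these sequences and count them. Conversely, every RGF is an ascent sequence, since each new maximum is created by an ascent. So any RGF of the described form that avoids both patterns is admissible.

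\emph{Step 1 (avoiding $0121$).} As in the proof of Lemma~\ref{lem:0121-0112-tree}, let $m$ be the current maximum of an RGF $x$. Appending any $t$ with $1\le t\le m-1$ creates $0\,t\,m\,t$, an occurrence of $0121$. Hence the subsequence $x^+$ of positive entries is weakly increasing, of the form $1^{a_1}2^{a_2}\cdots m^{a_m}$ with all $a_j\ge 1$, and zeros may be interspersed. Conversely, any such RGF avoids $0121$, since no positive letter is ever followed by a smaller positive letter.

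\emph{Step 2 (avoiding $0102$, equivalently $102$).} Since $x^+$ is weakly increasing, an occurrence of $102$ must use a $0$ as its middle letter. So we need: no positive $b$, then a later $0$, then a still later $c>b$. I split by $\max(x)$.
\begin{itemize}
\item If $\max(x)=0$, then $x$ is the single all-zero sequence.
\item If $\max(x)=1$, the condition is vacuous. So $x=0^i1w$ with $i\ge1$ and $w\in\{0,1\}^{n-i-1}$ arbitrary, giving $\sum_{i=1}^{n-1}2^{n-i-1}=2^{n-1}-1$ sequences.
\item If $m=\max(x)\ge 2$, take any $0$ occurring after the first $1$. Every later positive letter is at least $1$, and the $1$ before this zero is smaller than it. Moreover a later letter exists of size at least $m\ge2>1$ if the zero precedes the first $m$, and otherwise any positive after the zero is $m>1$. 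So in either case we get a $102$, and hence no positive letter may follow such a zero. Thus $x=0^i\,1^{a_1}\cdots m^{a_m}\,0^{j}$ with $i\ge1$ and $j\ge0$, and such sequences clearly avoid $102$.
\end{itemize}

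\emph{Step 3 (counting).} Let $k$ be the length of the positive block. The number of weakly increasing surjections of length $k$ onto $\{1,\dots,m\}$ is $\binom{k-1}{m-1}$. Summing over $m\ge2$ gives $2^{k-1}-1$. For each $k$ there are $n-k$ choices of $i\ge1$, with $j=n-i-k$ determined. Therefore
\[
a_{\{0102,0121\}}(n)=1+(2^{n-1}-1)+\sum_{k=2}^{n-1}(n-k)\bigl(2^{k-1}-1\bigr).
\]
It remains to evaluate this sum. The standard identity $\sum_{k}(n-k)2^{k-1}=2^{n}-n-1$, with $k$ running over $1,\dots,n-1$ and the $k=1$ term corrected, combined with $\sum_{k=2}^{n-1}(n-k)=\binom{n-1}{2}$, gives $\frac12(3\cdot2^n-n^2-n-2)$. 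As a sanity check, for $n=4$ the three cases contribute $1+7+(2+3)=13$, matching Table~\ref{tab-results}.

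I expect the main obstacle to be Step~2 for $m\ge2$: showing rigorously that no positive letter can follow any zero that comes after the first $1$. The argument must cover zeros both before and after the first occurrence of $m$, and should use the first $1$ as the witness $b$. The closed-form summation in Step~3 is routine.
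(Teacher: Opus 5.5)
Your proof is correct, but it takes a different route from the paper. The paper uses the same initial reduction (Lemmas~\ref{RGF-lem} and~\ref{RGF-lem-wilf}) to identify $\mathcal{A}_n(\{0102,0121\})$ with $\mathcal{A}_n(\{102,021\})$. It then simply cites the enumeration of the latter by Baxter and Pudwell~\cite{BaxterPudwell2015} and does no counting of its own.

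You instead derive an explicit description of the class: the all-zero word, the $2^{n-1}-1$ binary words containing a $1$, and the words $0^i1^{a_1}\cdots m^{a_m}0^j$ with $m\ge 2$, $i\ge1$, all $a_r\ge1$, and $j\ge0$. You then sum directly. I checked that $1+(2^{n-1}-1)+\sum_{k=2}^{n-1}(n-k)(2^{k-1}-1)$ does simplify to $\frac12(3\cdot2^n-n^2-n-2)$ for all $n\ge1$, including $n=1,2$ where the last sum is empty. Your remark that every RGF is an ascent sequence also correctly settles the converse direction.

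The paper's route is shorter but rests entirely on an external result. Yours is self-contained and shows where each term of the formula comes from. It would also give refinements immediately, for example by maximum letter or by length of the trailing zero block.

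One wording point in Step~2: the clause ``the $1$ before this zero is smaller than it'' fails when the later positive letter is itself $1$. The argument is really closed by your next sentence, which reads more cleanly as follows. If a zero after the first $1$ preceded the first $m$, then the first $1$, that zero, and the first $m$ would already form a $102$. So every such zero lies after the first $m$, and any positive letter following it equals $m>1$, again giving a $102$.
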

\begin{proof}
By Lemmas~\ref{RGF-lem} and~\ref{RGF-lem-wilf}, we have $\mathcal{A}_n(\{0102,0121\})=\mathcal{A}_n(\{102,021\})$. Hence, it suffices to enumerate $\mathcal{A}_n(\{102,021\}$, which was done in \cite{BaxterPudwell2015}. This completes the proof of Theorem~\ref{thm:0102-0121}. \end{proof}

\subsection{Pair \texorpdfstring{$\{0112,0120\}$}{{0112,0120}}}
\begin{thm}\label{thm:L2-0112,0120}
For $n\ge 1$, we have $a_{\{0112,0120\}}(n)=2^{\,n-1}+\binom{n+1}{4}$.
\end{thm}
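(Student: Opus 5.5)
We would use Lemmas~\ref{RGF-lem} and~\ref{RGF-lem-wilf} to reduce to counting RGF ascent sequences avoiding $0112$ and $120$. We then combine this with the structural description of $0112$-avoiders quoted from Theorem~2.9 of \cite{DuncanSteingrimsson2011} in the proof of Theorem~\ref{thm:L2-4}: such an $x$ is a strictly increasing word $12\cdots k$ followed by a weakly decreasing word, with $0$'s inserted arbitrarily. We split according to $k=\max(x)$.

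\emph{Case $k\le 1$.} Here $x$ is any binary word beginning with $0$. Such a word cannot contain $120$, which needs three distinct values. This gives exactly $2^{n-1}$ sequences, which is the first term.

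\emph{Case $k\ge 2$.} We pin down the shape by the following observations.
\begin{itemize}
\item[(i)] No $0$ can occur after the first $2$, since the subsequence $1\,2\,0$ would be an occurrence of $120$. So all $0$'s lie before the first $2$.
\item[(ii)] Every letter $d$ in the weakly decreasing tail satisfies $d\ge k-1$. Otherwise, if $d\le k-2$, the earlier letters $d+1$ and $k$ of the increasing run together with $d$ form $120$.
\item[(iii)] No positive letter is repeated before the letter $k$. Such a repetition followed by a larger letter is a $0112$, and this also forces the zeros to be the only letters interleaved before the run ends.
\end{itemize}
Hence $x$ has the form
\[
0^{a}\,1\,0^{b}\,2\,3\cdots k\; k^{p}(k-1)^{q},\qquad a\ge1,\ b,p,q\ge0,\ k\ge2.
\]
Conversely, we will check directly that every word of this form is an RGF ascent sequence avoiding both $0112$ and $120$. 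The only potentially dangerous triples are of the form $(k-2,k,k-1)$, and these reduce to $021$, not $120$. The only repeated values are the letters $k$ and $k-1$ in the tail, and neither is followed by a larger letter, so no $0112$ arises.

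The count is then a stars-and-bars computation. The length is
\[
n=a+1+b+(k-1)+p+q .
\]
Putting $a'=a-1$ and $k'=k-2$, we get $a'+b+k'+p+q=n-3$ with five nonnegative variables. This has $\binom{n+1}{4}$ solutions, and distinct solutions give distinct words.

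Adding the two cases gives $2^{n-1}+\binom{n+1}{4}$. As a sanity check, for $n=4$ this is $8+5=13$, and the five words with $k\ge 2$ are $0012$, $0102$, $0121$, $0122$ and $0123$.

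The main obstacle is step (iii), together with the converse check. We must make sure the description from \cite{DuncanSteingrimsson2011} interacts correctly with zeros, so that no word is missed or double counted. In particular we need to rule out zeros or repeated small letters hidden between the increasing run and the tail. We would verify this by checking all small cases against the values $1,2,5,13,31,67$ listed in Table~\ref{tab-results}.
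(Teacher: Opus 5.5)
Your proof is correct, and it reaches the formula by a more direct counting route than the paper. The paper also sorts avoiders by the shape of $x^+$, using six shapes: three make up the binary words, and the other three are $12\cdots m$, $12\cdots(m-1)m^p$ and $12\cdots(m-1)m^p(m-1)^q$. It handles the zeros only implicitly, though. It turns the shapes into labels of a generating tree and justifies the succession rules by checking which letters may be appended: $t\le m-2$ creates $0120$, and a letter larger than an already repeated positive letter creates $0112$. It then solves linear recurrences, obtaining $2^{n-1}$ from the first three labels and $\binom{n-1}{2}+\binom{n-1}{3}+\binom{n}{4}=\binom{n+1}{4}$ from the other three.

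You instead use observation (i) to place every zero before the first $2$. This gives the explicit normal form $0^a10^b23\cdots k\,k^p(k-1)^q$, and $\binom{n+1}{4}$ then appears at once as the number of weak compositions of $n-3$ into five parts. This explains the binomial coefficient bijectively and needs no recurrences. The paper's formulation pays off later, because the same labelled tree is pruned and refined in Theorems~\ref{thm:L3-1}, \ref{thm:L3-3}, \ref{thm:L4-3} and \ref{thm:A-0101-0102-0112-0120-0121}. Your normal form adapts just as easily, however: adding $0121$ forces $q=0$ and adding $0102$ forces $b=0$, each giving $2^{n-1}+\binom{n}{3}$.

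The ``main obstacle'' you flag is not one, and no small-case check is needed; your converse argument already suffices once stated precisely. The last letter of a $120$ would need two larger letters before it, appearing in increasing order. That fails for a zero, since only the letter $1$ precedes the zeros. It fails for a letter of the increasing run, since everything before it is smaller. It also fails for a tail letter $k$ or $k-1$. Finally, repeated zeros cannot serve as the repeated letter of a $0112$, since nothing smaller precedes them.
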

\begin{proof}
For a sequence $x$, let $x^+$ denote the subsequence consisting of its positive entries. We classify the sequences in $\mathcal A_n(\{0112,0120\})$ according to the form of $x^+$. Arguing as in the proof of Lemma~\ref{lem:0121-0112-tree}, every sequence in $\mathcal A_n(\{0112,0120\})$ is an RGF, so if $x$ has a positive entry and $m=\max(x)$, then the first occurrences of its positive letters are $1,2,\dots,m$ in order. Moreover, avoiding $0120$ forbids any later positive letter at most $m-2$, while avoiding $0112$ forbids any repeated positive letter before the first $m$ and any later occurrence of $m$ after an $m-1$ has appeared. Therefore, $x^+$ must be of one of the following six forms, and we assign labels accordingly:
\[
\begin{aligned}
(0):\;& x=0^i;\\
(01):\;& x^+=1;\\
(011):\;& x^+=1^p \text{ for some } p\ge2;\\
(012):\;& x^+=12\cdots m \text{ for some } m\ge2;\\
(0122):\;& x^+=12\cdots (m-1)m^p \text{ for some } m\ge2,\ p\ge2;\\
(0121):\;& x^+=12\cdots (m-1)m^p(m-1)^q \text{ for some } m\ge2,\ p\ge1,\ q\ge1.
\end{aligned}
\]

With these labels, $\mathcal A_n(\{0112,0120\})$ is generated by a finite generating tree with root $(0)$ and the following succession rules:
\[
\begin{aligned}
\begin{array}{l@{\qquad}l}
\begin{aligned}
(0)    &\rightsquigarrow (0)(01)\\
(01)   &\rightsquigarrow (01)(011)(012)\\
(011)  &\rightsquigarrow (011)(011)
\end{aligned}
&
\begin{aligned}
(012)  &\rightsquigarrow (0121)(0122)(012)\\
(0122) &\rightsquigarrow (0121)(0122)\\
(0121) &\rightsquigarrow (0121)
\end{aligned}
\end{array}
\end{aligned}
\]
The first three rules are proved as in Lemma~\ref{lem:0121-0112-tree}. We now justify the remaining ones.

We first record a common observation. Let $x$ be a sequence with label $(0)$, $(01)$, $(012)$, $(0122)$, or $(0121)$. Then any extension $xt$ with $t\le m-2$ creates an occurrence of $0120$. Indeed, in each case, $x^+$ contains $12\cdots m$, so appending such a $t$ yields a subsequence order-isomorphic to $0120$.

If $x$ has label $(012)$, then the admissible letters of $t$ are $m-1$, $m$, and $m+1$, giving rise to children with labels $(0121)$, $(0122)$, and $(012)$, respectively. Hence, $(012)\rightsquigarrow(0121)(0122)(012)$.

If $x$ has label $(0122)$, then $t=m+1$ creates $0112$, so the admissible letters of $t$ are $m-1$ and $m$, yielding children with labels $(0121)$ and $(0122)$, respectively. Hence, $(0122)\rightsquigarrow(0121)(0122)$.

If $x$ has label $(0121)$, then any $t\ge m$ creates $0112$, so the only admissible letters of $t$ is $m-1$. Hence, $(0121)\rightsquigarrow(0121)$.

Now let $z_n$, $a_n$, $b_n$, $c_n$, $d_n$, and $e_n$ denote the numbers of nodes at level $n$
with labels $(0)$, $(01)$, $(011)$, $(012)$, $(0122)$, and $(0121)$, respectively.
Then $a_1=b_1=c_1=d_1=e_1=0$ and $z_1=1$, and the succession rules imply
\[
\begin{aligned}
z_{n+1}&=z_n, & a_{n+1}&=z_n+a_n, & b_{n+1}&=a_n+2b_n,\\
c_{n+1}&=a_n+c_n, & d_{n+1}&=c_n+d_n, & e_{n+1}&=c_n+d_n+e_n.
\end{aligned}
\]

Solving these recurrences yields
\[
\begin{aligned}
z_n&=1, & a_n&=n-1, & b_n&=2^{\,n-1}-n,\\
c_n&=\binom{n-1}{2}, & d_n&=\binom{n-1}{3}, & e_n&=\binom{n}{4}.
\end{aligned}
\]
Therefore, $a_{\{0112,0120\}}(n)=z_n+a_n+b_n+c_n+d_n+e_n=2^{n-1}+\binom{n+1}{4}$, as desired.\end{proof}

\section{Avoidance of triples of patterns}\label{avoid-triple-sec}

\subsection{\texorpdfstring{Triple $\{0101,0112,0120\}$}{{0101,0112,0120}}}

\begin{thm}\label{thm:L3-1}
We have $a_{\{0101,0112,0120\}}(1)=1$, and for $n\ge2$,
\begin{equation}\label{formula-0101,0112,0120}
a_{\{0101,0112,0120\}}(n)
=
\frac1{24}\left(n^4-6n^3+47n^2-114n+120\right).
\end{equation}
\end{thm}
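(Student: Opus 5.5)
We refine the generating tree from the proof of Theorem~\ref{thm:L2-0112,0120}. Every sequence in $\mathcal A_n(\{0101,0112,0120\})$ lies in $\mathcal A_n(\{0112,0120\})$, so its positive part $x^+$ has one of the six forms $(0)$, $(01)$, $(011)$, $(012)$, $(0122)$, $(0121)$ listed there. By Lemmas~\ref{RGF-lem} and~\ref{RGF-lem-wilf}, the extra condition is avoidance of $101$. For these forms of $x^+$, the only way to create $101$ is a letter $b>0$ that reappears after a $0$ has been written following an earlier $b$. Letters $b\,a\,b$ with $0<a<b$ cannot occur, because every repeated positive letter in the six forms is either the maximum $m$ (with nothing smaller and positive between its copies) or $m-1$ (with only $m$'s between its copies).

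So the plan is to split each label $(011)$, $(0122)$, $(0121)$, and also $(01)$ and $(012)$, according to one bit. The bit records whether a $0$ has been appended since the last occurrence of the letter that may still be repeated. That letter is $1$ in $(01)$ and $(011)$, $m$ in $(012)$ and $(0122)$, and $m-1$ in $(0121)$. In the ``closed'' state, repeating that letter is forbidden. Appending $0$ moves an ``open'' node to the ``closed'' state, and appending a nonzero letter returns the node to the open state.

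We then rederive the succession rules for each of the roughly ten labels, as in Theorem~\ref{thm:L2-0112,0120}. For example, a closed $(012)$ node can append $0$, $m-1$ or $m+1$, but not $m$. It can append $m-1$ because the copies of $m-1$ are separated only by $m$ and the $0$'s, and the pattern $(m-1)0(m-1)$ is itself a $101$. So we must check whether a zero occurred after the first $m-1$. This check forces an extra distinction: appending $m-1$ is allowed only if no $0$ has appeared since the first $m-1$, and appending $m$ (when $m$ is still repeatable) only if no $0$ has appeared since the last $m$. After fixing the labels, we write the linear recurrences $u_{n+1}=Mu_n$ for the level counts.

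The transition matrix is triangular, with diagonal entries equal to $1$ except possibly on the $(011)$ branch. The $(011)$ class now collapses to $0^i1^p0^j$, which has polynomially many members. Hence every label count is a polynomial in $n$, of degree bounded by the length of the longest chain of distinct labels. We solve the recurrences successively, starting from $z_n=1$, the $(01)$ counts and the $(011)$ count $\binom{n-1}{2}$, and then the $(012)$, $(0122)$, $(0121)$ branches. These produce binomial sums $\binom{n-1}{k}$ up to $k=4$. Summing all labels, we check the result against \eqref{formula-0101,0112,0120}, using the initial values $1,2,5,12,25$. The exceptional value at $n=1$ arises because a polynomial recurrence of this kind typically holds only once all labels are reachable, which happens from $n=2$ on.

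As a cross-check, we can also count directly. Each form of $x^+$ is a word with at most two repeated letters, and the $0$'s must be placed so that no $0$ lies strictly between two copies of a repeated positive letter. This gives a stars-and-bars count of the gaps available for zeros in each form, and the resulting sum of binomials should reproduce the quartic.

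The main obstacle is getting the labels exactly right for $(0121)$ and $(0122)$. There, zeros inserted before the first $m-1$, between the $m$-run and the $(m-1)$-run, or after the last $m-1$ have different consequences: for instance, a zero after the $m$'s but before the first $m-1$ still allows $m-1$ to be appended, but forbids further $m$'s. If the tree approach becomes messy, we will fall back on the direct gap-counting description.
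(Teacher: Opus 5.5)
Your overall strategy of refining the $\{0112,0120\}$ tree to enforce $101$-avoidance is the same as the paper's. However, the proposal has a genuine gap: it never fixes the labels and succession rules, which are the whole content of the proof, and the rules it does state are wrong. The missing observation is already built into the $\{0112,0120\}$ tree. Once $\max(x)=m\ge2$, no $0$ can be appended, because the initial $0$, the first $1$, an $m$ and the new $0$ form $0120$. Three consequences follow:
\begin{itemize}
\item All zeros lie before the first $2$, so the ``closed'' states of $(012)$, $(0122)$ and $(0121)$ are empty.
\item Your sample rule ``a closed $(012)$ node can append $0$, $m-1$ or $m+1$'' is false.
\item The zero placements you single out as the main obstacle (between the $m$-run and the $(m-1)$-run, or after the last $m-1$) never occur.
\end{itemize}
Your rule that appending a nonzero letter reopens the node also fails, for $0^i10^j2$ with $j\ge1$: those zeros still forbid a later $1$.

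In fact $101$ can be created in only two ways:
\begin{itemize}
\item appending $1$ to $0^i1^p0^j$ with $j\ge1$;
\item appending $1$ to $0^i10^j2^p$ with $j\ge1$.
\end{itemize}
Once a $3$ appears, the zeros between $1$ and $2$ are harmless, because appending $1$ is then forbidden by $0120$ anyway. This is exactly what the paper encodes with four new labels: $(010)=0^i10^j$, $(0110)=0^i1^p0^j$, $(0102)=0^i10^j2$ and $(01022)=0^i10^j2^p$. The key rule is $(0102)\rightsquigarrow(012)(01022)$.

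Your gap-counting cross-check has the same defect. If zeros are constrained only by $101$, it would count sequences such as $0120$ and $01230$. Done correctly (all zeros before the first $2$, and none between the two $1$'s), it is a complete and shorter proof. Besides $0^n$, the forms
\begin{itemize}
\item $x^+=1^p$,
\item $12\cdots m$ ($m\ge2$),
\item $12\cdots(m-1)m^p$ ($m,p\ge2$),
\item $12^p1^q$,
\item $12\cdots(m-1)m^p(m-1)^q$ ($m\ge3$)
\end{itemize}
contribute, respectively, $\binom{n}{2}$, $\binom{n-1}{2}$, $\binom{n-1}{3}$, $\binom{n-2}{2}$ and $\binom{n-1}{4}$. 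One plus their sum is the quartic for $n\ge2$. The term $\binom{n-2}{2}$, which as a polynomial equals $1$ rather than $0$ at $n=1$, is the true source of the exception at $n=1$.

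Finally, your closing check may be meant to pin down the quartic from the values $1,2,5,12,25$. Those are the values at $n=1,\dots,5$, and the formula is false at $n=1$. You would instead need the values at $n=2,\dots,6$, namely $2,5,12,25,47$, together with a proof that the count is a polynomial for all $n\ge2$. The ``all labels are reachable'' remark does not supply that proof.
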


\begin{proof}
We obtain this class by refining the generating tree for
$\mathcal A_n(\{0112,0120\})$ in the proof of Theorem~\ref{thm:L2-0112,0120}, while the labels descended from $(01)$ must be refined in order to account
for the additional avoidance of $0101$.  
The resulting succession rules are as follows:
\[
\begin{aligned}
\begin{array}{l@{\qquad}l}
\begin{aligned}
(0)      &\rightsquigarrow (0)(01)\\
(01)     &\rightsquigarrow (010)(011)(012)\\
(011)    &\rightsquigarrow (0110)(011)\\
(010)    &\rightsquigarrow (010)(0102)\\
(012)    &\rightsquigarrow (0121)(0122)(012)
\end{aligned}
&
\begin{aligned}
(0110)   &\rightsquigarrow (0110)\\
(0102)   &\rightsquigarrow (012)(01022)\\
(0121)   &\rightsquigarrow (0121)\\
(0122)   &\rightsquigarrow (0121)(0122)\\
(01022)  &\rightsquigarrow (01022)
\end{aligned}
\end{array}
\end{aligned}
\]
We now justify the succession rules.

\medskip
\noindent\emph{Justification of $(0)\rightsquigarrow (0)(01)$.}
The root $0$ is labeled $(0)$, and its two children are $00$ and $01$. Since every all-zero ascent sequence has an isomorphic set of children, all and only such sequences receive label $(0)$; the other child is labeled $(01)$.

\medskip
\noindent\emph{Justification of $(01)\rightsquigarrow (010)(011)(012)$.}
The first sequence with label $(01)$ is $01$, whose children are $010$, $011$, and $012$. More generally, any sequence with label $(01)$ has exactly three admissible extensions: appending $0$, repeating the last letter, or appending a new maximum. These yield the labels $(010)$, $(011)$, and $(012)$, respectively.

\medskip
\noindent\emph{Justification of $(010)\rightsquigarrow (010)(0102)$.}
The children of $010$ are $0100$, $0101$, and $0102$, but $0101$ is forbidden because it contains the pattern $0101$. Thus $0100$ retains label $(010)$, while $0102$ receives label $(0102)$. More generally, if $x$ has label $(010)$, then its only admissible extensions are obtained by repeating the last letter or by appending $\max(x)+1$, and these children receive labels $(010)$ and $(0102)$, respectively.

\medskip
\noindent\emph{Justification of $(011)\rightsquigarrow (0110)(011)$.}
The children of $011$ are $0110$, $0111$, and $0112$, but $0112$ is forbidden because it contains the pattern $0112$. Hence the first two children receive labels $(0110)$ and $(011)$, respectively. More generally, any sequence with label $(011)$ has exactly two admissible extensions: appending $0$ or repeating the last letter, yielding labels $(0110)$ and $(011)$.

\medskip
\noindent\emph{Justification of $(012)\rightsquigarrow (0121)(0122)(012)$.}
The first sequence with label $(012)$ is $012$, whose children are $0120$, $0121$, $0122$, and $0123$. The first is forbidden because it contains the pattern $0120$. For $0123$, once the letter $3$ appears, no further $0$ can occur; otherwise, the sequence would contain the pattern $0120$. Thus the initial $0$ becomes irrelevant, and only the active suffix $123$ matters for subsequent pattern avoidance. We therefore assign $0123$ the same label $(012)$. More generally, if $x$ has label $(012)$, then its only admissible extensions are obtained by appending $\max(x)-1$, $\max(x)$, or $\max(x)+1$, and these children receive labels $(0121)$, $(0122)$, and $(012)$, respectively.

\medskip
\noindent\emph{Justification of $(0122)\rightsquigarrow (0121)(0122)$.}
The children of $0122$ are $01220$, $01221$, $01222$, and $01223$. The first and last are forbidden, since they contain the patterns $0120$ and $0112$, respectively. Hence the admissible extensions are obtained by appending $\max(x)-1$ or $\max(x)$, and these yield the labels $(0121)$ and $(0122)$.

\medskip
\noindent\emph{Justification of $(0102)\rightsquigarrow (012)(01022)$.}
The children of $0102$ are $01020$, $01021$, $01022$, and $01023$. The first two are forbidden, since they contain the patterns $0120$ and $0101$, respectively. The child $01022$ receives label $(01022)$. For $01023$, once the letter $3$ appears, no further occurrence of $0$ or $1$ is allowed: a later $0$ would create the pattern $0120$, while a later $1$ would create the pattern $0101$. Thus only the active suffix $23$ remains relevant, and we assign $01023$ the label $(012)$. More generally, if $x$ has label $(0102)$, then its only admissible extensions are obtained by repeating the last letter or by appending $\max(x)+1$, and these children receive labels $(01022)$ and $(012)$, respectively. 

\medskip
\noindent\emph{Justification of $(01022)\rightsquigarrow (01022)$.}
The children of $01022$ are $010220$, $010221$, $010222$, and $010223$. The first, second, and fourth are forbidden, since they contain the patterns $0120$, $0101$, and $0112$, respectively. Thus only $010222$ is admissible, and it remains in the same class. More generally, every sequence with label $(01022)$ is of the form $0\cdots010\cdots02\cdots2$, and its unique admissible child is obtained by repeating the last letter. 
Similarly, one obtains $(0110)\rightsquigarrow (0110)$ and $ (0121)\rightsquigarrow (0121).$

Now let $z_n$, $a_n$, $b_n$, $c_n$, $d_n$, $e_n$, $f_n$, $g_n$, $h_n$, $i_n$ denote the numbers of nodes at level $n$ carrying the labels (0), (01), (011), (010), (012), (0110), (0102), (0121), (0122), (01022), respectively. The succession rules give the system of equations: $z_{n+1}=z_n$,
 $a_{n+1}=z_n$,  $b_{n+1}=a_n+b_n$, $c_{n+1}=a_n+c_n$, $d_{n+1}=a_n+d_n+f_n$, $e_{n+1}=b_n+e_n$, $f_{n+1}=c_n$, $g_{n+1}=d_n+g_n+h_n$, $h_{n+1}=d_n+h_n$, and $i_{n+1}=f_n+i_n$, 
with initial values $z_1=1$ and $a_1=b_1=c_1=d_1=e_1=f_1=g_1=h_1=i_1=0$. Solving these recurrences yields
\[
z_n=1,\ \ \ a_n=1,\ \ \ b_n=n-2,\ \ \ c_n=n-2,\ \ \ d_n=1+\binom{n-2}{2},\ \ \ e_n=\binom{n-2}{2},
\]
\[
f_n=n-3,\ \ \ h_n=(n-3)+\binom{n-2}{3},\ \ \ i_n=\binom{n-3}{2},\ \ \ g_n=\binom{n-2}{2}+\binom{n-2}{3}+\binom{n-2}{4}.
\]
By simplifying $z_n+a_n+b_n+c_n+d_n+e_n+f_n+g_n+h_n+i_n$, we obtain~\eqref{formula-0101,0112,0120}.
\end{proof}

\begin{thm}\label{thm:L3-3}
For $n\ge1$, we have
\[
a_{\{0102,0112,0120\}}(n)
=
a_{\{0102,0112,0121\}}(n)
=
a_{\{0112,0120,0121\}}(n)
=
2^{n-1}+\binom{n}{3}.
\]
\end{thm}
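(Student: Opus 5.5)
The plan is to reuse the two finite generating trees already built, for $\mathcal A_n(\{0112,0120\})$ in the proof of Theorem~\ref{thm:L2-0112,0120} and for $\mathcal A_n(\{0121,0112\})$ in Lemma~\ref{lem:0121-0112-tree}, and to prune or refine them for the third pattern. By Lemma~\ref{RGF-lem}, every sequence in each class is an RGF. So every extension $x\mapsto xt$ has $t\in\{0,\dots,m+1\}$ with $m=\max(x)$. By Lemma~\ref{RGF-lem-wilf}, avoiding $0121$ (resp.\ $0102$) is the same as avoiding $021$ (resp.\ $102$).

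\emph{The triple $\{0112,0120,0121\}$.} This should be the quickest case. In the tree for $\{0112,0120\}$, the only move that creates a $021$ is appending $m-1$ after an $m$ has appeared. This move produces exactly the children labelled $(0121)$. I will check that none of the other admissible moves creates $0121$: appending $0$, repeating $m$, or appending $m+1$ never places a smaller nonzero letter after a larger one. The tree therefore restricts to the labels $(0),(01),(011),(012),(0122)$ with the same rules, except that $(0121)$ is deleted. The counts $z_n,a_n,b_n,c_n,d_n$ are unchanged from the proof of Theorem~\ref{thm:L2-0112,0120}, so
\[
1+(n-1)+\bigl(2^{n-1}-n\bigr)+\binom{n-1}{2}+\binom{n-1}{3}=2^{n-1}+\binom{n}{3}.
\]

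\emph{The triples $\{0102,0112,0120\}$ and $\{0102,0112,0121\}$.} Here I add the condition of avoiding $102$. In an RGF, this means that once some letter is followed by a smaller letter, no letter larger than the first of these two may come later. In the trees under consideration, the only descents available are to $0$ or to $m-1$. I will refine each label $L$ whose sequences contain a positive letter into two labels, $L$ and $L'$, where $L'$ records that a descent has occurred. For $L'$, appending $m+1$ is forbidden. The descent to $m-1$ already forbids $m+1$ through $0112$, so in practice the refinement only tracks a $0$ occurring after a positive letter. Concretely, for $\{0121,0112\}$ the rule $(01)\rightsquigarrow(01)(011)(01)$ becomes $(01)\rightsquigarrow(01')(011)(01)$, with $(01')\rightsquigarrow(01')(011')$, and $(011)$, $(011')$ keep two children each. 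The analogous refinement applies to $(01),(012)$ in the $\{0112,0120\}$ tree; there, appending $0$ is already forbidden once $m\ge2$. I will then write the linear recurrences and solve them by induction, or via rational generating functions. The sums should come out to $2^{n-1}+\binom{n}{3}$; I will check the small values $1,2,5,12,26$ as a sanity test.

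The main obstacle is justifying the refined labels. I must make sure that the labels really determine the sets of admissible children. In particular, after a descent to $0$, repeating the maximum or appending $0$ should never create $102$, while appending $m+1$ always does, via $m\,0\,(m+1)$ or a positive letter followed by $0$. I also have to check that no earlier hidden descent matters. I expect the bookkeeping for $(011')$ versus $(011)$ to need the most care, since there $x^+=1\cdots(m-1)m^s$ and zeros may be interleaved among the repeated $m$'s.
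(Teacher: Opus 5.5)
Your treatment of $\{0112,0120,0121\}$ (delete the $(0121)$ branch of the $\{0112,0120\}$ tree) is the paper's argument. Your refinement for $\{0102,0112,0120\}$ is also sound: in the $\{0112,0120\}$ tree the labels $(01)$ and $(011)$ have $m=1$, and your primed labels $(01')$, $(011')$ together form the paper's label $(010)$.

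The gap is in $\{0102,0112,0121\}$. There you refine the $\{0121,0112\}$ tree, whose labels $(01)$ and $(011)$ allow any $m\ge1$. Your claim that after a descent to $0$ ``repeating the maximum never creates $102$'' is false as soon as $m\ge2$. By your own reformulation, a $0$ that follows the letter $1$ forbids every later letter larger than $1$, and $m$ is such a letter. Concretely, $0120$ is in the class and receives label $(01')$, yet its proposed child $01202$ contains $0102$ (positions $1,2,4,5$). So $(01')\rightsquigarrow(01')(011')$ and ``$(011')$ keeps two children'' hold only for $m=1$. For $m\ge2$, a primed node has exactly one child, obtained by appending $0$. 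With your rules, level $5$ of the tree has $27$ nodes rather than $26$, so your recurrences cannot yield $2^{n-1}+\binom{n}{3}$.

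To repair this, the primed labels must also record whether $m=1$. For $m\ge2$, avoiding $021$, $0112$ and $102$ forces $x=0^i12\cdots(m-1)m^s0^j$ with $i,s\ge1$ and $j\ge0$. The nodes with $j\ge1$ then form a dead-end label $(E)\rightsquigarrow(E)$. The corrected tree is isomorphic to the paper's tree for $\{0102,0112,0120\}$, with $(E)$ playing the role of $(0121)$; this is the identification the paper invokes in one line. Alternatively, you can count directly: $m=0$ gives $1$ sequence, $m=1$ gives $2^{n-1}-1$, and each $m\ge2$ gives $\binom{n-m+1}{2}$. The total is $1+(2^{n-1}-1)+\sum_{m=2}^{n-1}\binom{n-m+1}{2}=2^{n-1}+\binom{n}{3}$.
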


\begin{proof} We consider each triple of patterns separately.

\noindent
{\bf Enumeration of $\mathcal A_n(\{0102,0112,0120\})$.}
We begin with the generating tree for $\mathcal{A}_n(\{0112,0120\})$ and impose the additional avoidance of $0102$. Arguing as in the proof of Theorem~\ref{thm:L3-1}, we obtain the following succession rules:
\[
\begin{aligned}
\begin{array}{l@{\qquad}l}
\begin{aligned}
(0)     &\rightsquigarrow (0)(01)\\
(01)    &\rightsquigarrow (010)(011)(012)\\
(011)   &\rightsquigarrow (010)(011)\\
(010)   &\rightsquigarrow (010)(010)
\end{aligned}
&
\begin{aligned}
(012)   &\rightsquigarrow (0121)(0122)(012)\\
(0121)  &\rightsquigarrow (0121)\\
(0122)  &\rightsquigarrow (0121)(0122)
\end{aligned}
\end{array}
\end{aligned}
\]
The labels $(0)$, $(01)$, $(012)$, $(0121)$, and $(0122)$ obey the same succession rules as in the proof of Theorem~\ref{thm:L3-1}, except that branches creating $0102$ are deleted. Since $0101$ is no longer forbidden, a node with label $(010)$ has two admissible children, both labeled $(010)$, while a node with label $(011)$ has two admissible children with labels $(010)$ and $(011)$.

Now let $z_n$, $a_n$, $b_n$, $c_n$, $d_n$, $e_n$, and $f_n$ denote the numbers of nodes at level $n$ carrying the labels $(0)$, $(01)$, $(011)$, $(010)$, $(012)$, $(0121)$, and $(0122)$, respectively. The above succession rules give
\[
\begin{array}{l@{\qquad}l@{\qquad}l}
z_{n+1}=z_n,      & a_{n+1}=z_n,            & b_{n+1}=a_n+b_n,\\
c_{n+1}=a_n+b_n+2c_n, & d_{n+1}=a_n+d_n,   & e_{n+1}=d_n+e_n+f_n,\\
f_{n+1}=d_n+f_n.  &                         &
\end{array}
\]
with initial values $z_1=1$ and $a_1=b_1=c_1=d_1=e_1=f_1=0$. Thus
\[
z_n=1,\ a_n=1,\ b_n=n-2,\ c_n=2^{\,n-1}-n,\ d_n=n-2,\ e_n=\binom{n-1}{3},\ f_n=\binom{n-2}{2},
\]
and hence $a_{\{0102,0112,0120\}}(n)=z_n+a_n+b_n+c_n+d_n+e_n+f_n=2^{\,n-1}+\binom{n}{3}$.

\medskip
\noindent
{\bf Enumeration of $\mathcal A_n(\{0102,0112,0121\})$.} 
It follows immediately that $a_{\{0102,0112,0121\}}(n) = a_{\{0102,0112,0120\}}(n)$,
since the generating trees are identical except for the replacement of the label $(0120)$ with $(0121)$, which preserves the enumeration.

\medskip
\noindent
{\bf Enumeration of $\mathcal A_n(\{0112,0120,0121\})$.} 
The generating tree is identical to that for the set $\mathcal A_n(\{0112,0120\})$, except that all labels and branches corresponding to the pattern $0121$ are removed. 
All other labels and succession rules remain unchanged. Therefore, by the enumeration in Theorem~\ref{thm:L2-0112,0120}, we have, $a_{\{0112,0120，0121\}}(n)=z_n+a_n+b_n+c_n+d_n=2^{n-1}+\binom{n}{3}$, as desired. 

This completes the proof of Theorem~\ref{thm:L3-3}.
\end{proof}

\subsection{ \texorpdfstring{Triples $\{0101,0102,0112\}$,$\{0101,0102,0120\}$,$\{0101,0102,0121\}$,$\{0101,0112,0121\}$, $\{0101,0120,0121\}$ and $\{0102,0120,0121\}$}{{0101,0102,0112},{0101,0102,0120},{0101,0102,0121},{0101,0112,0121}, {0101,0120,0121} and {0102,0120,0121}}}

\begin{thm}\label{thm:L3-6}
For $n\geq 1$, we have
\begin{equation}\label{eq:L3-6}
\begin{aligned}
&a_{\{0101,0102,0112\}}(n)=a_{\{0101,0102,0120\}}(n)=a_{\{0101,0102,0121\}}(n)=\\
&a_{\{0101,0112,0121\}}(n)=a_{\{0101,0120,0121\}}(n)=a_{\{0102,0120,0121\}}(n)=2^n-n.
\end{aligned}
\end{equation}
\end{thm}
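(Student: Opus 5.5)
We treat the six triples one at a time; in each case we aim for a small finite generating tree (or a direct structural description) whose level counts sum to $2^n-n$. Since every pattern set here contains $0101$, $0102$ or $0112$, Lemma~\ref{RGF-lem} tells us all sequences are RGFs. Wherever the triple consists only of $0101$, $0102$, $0120$, $0121$, Lemma~\ref{RGF-lem-wilf} lets us replace it by the corresponding length-$3$ patterns. This covers $\{0101,0102,0120\}\to\{101,102,120\}$, $\{0101,0102,0121\}\to\{101,102,021\}$, $\{0101,0120,0121\}\to\{101,120,021\}$ and $\{0102,0120,0121\}\to\{102,120,021\}$. For these four, the cleanest route is to check whether the triples appear among the triples of length-$3$ patterns enumerated by Callan and Mansour~\cite{CallanMansour2025}, with count $2^n-n$, and cite them. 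Independently, I would give short direct arguments. The target formula decomposes as $2^n-n=1+(n-1)+(2^{n}-2n)$, and in the generating trees seen so far it arises as $z_n+a_n+b_n+c_n$ with labels counted by $1$, $1$, $n-2$ and $2^{n-1}-n$ (compare the proof of Theorem~\ref{thm:L3-3}). So I expect every case to produce a tree in which one label doubles ($(X)\rightsquigarrow(X)(X)$) while the others grow linearly.

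For $\{0101,0102,0112\}$ and $\{0101,0112,0121\}$ I would refine existing trees. The second triple refines the tree of Lemma~\ref{lem:0121-0112-tree} by also forbidding $0101$. Here $x^+=12\cdots m$ or $12\cdots(m-1)m^s$, and avoiding $101$ means that after a $0$ follows a positive letter $j$, the letter $j$ cannot recur later. I would split label $(01)$ according to whether the last letter is $0$ (then only $0$ and $m+1$ are allowed) or $m$ (then $0$, $m$ and $m+1$ are allowed). This mirrors the $(0,m)/(1,m)$ split in the proof of Theorem~\ref{thm:L2-2} but truncates the $(011)$ branch. The first triple refines the tree for $\{0102,0112\}$ obtained from the $\beta\gamma_1\cdots\gamma_r$ decomposition in Theorem~\ref{thm:L2-4}. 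Forbidding $0101$ kills every $\gamma_j$ containing a $1$ once a $1$ has already appeared before a $0$. So the sequences become: all zeros; or $0^i12\cdots a$ followed by a weakly decreasing word on $\{1,\dots,a\}$ followed by $0^j$. The generating function is then
\[
\frac{x}{1-x}+\frac{x}{1-x}\cdot\frac{1}{1-x}\sum_{a\ge1}x^a\Bigl(\frac{1}{1-x}\Bigr)^{a},
\]
and we verify that it equals $\sum_n(2^n-n)x^n$.

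For the length-$3$ reductions, the plan is to describe the structure directly.
\begin{itemize}
\item For $\{101,102,120\}$: by the proof of Theorem~\ref{thm:L2-2}, the sequence is weakly increasing up to its first descent and weakly decreasing afterwards. Forbidding $120$ additionally forces every letter after the peak $m$ to be at least $m-1$. Counting these words gives $2^{n-1}$ from the weakly increasing RGFs plus a linear correction.
\item For $\{101,102,021\}$ and $\{101,120,021\}$: I would take the tree from the $\{0101,0121\}$ part of Theorem~\ref{thm:L2-2} with labels $(0,m)$ and $(1,m)$, and delete the children violating the third pattern. For example, forbidding $102$ removes the $(1,m+1)$ child of $(0,m)$ when $m\ge1$.
\item For $\{102,120,021\}$: after the first occurrence of the maximum only letters $0$, $m$, $m-1$ are possible, and one checks which combinations survive.
\end{itemize}
In each case, solve the resulting linear recurrences as in Theorem~\ref{thm:L3-3}.

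The main obstacle is the case analysis in the reduced trees. One must check that the labels really capture all future constraints, especially the ``active suffix'' phenomenon used in Theorem~\ref{thm:L3-1}: after a new maximum appears, the earlier letters become irrelevant. One must also handle small-$m$ exceptions, such as $m=1$, where $m-1=0$ merges two cases. I would sanity-check each tree against the initial terms $1,2,5,12,27,58$ before solving.
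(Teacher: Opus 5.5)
Your proposal is correct. For five of the six triples it follows the paper.

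\begin{itemize}
\item For $\{0101,0102,0112\}$ the paper also keeps only Cases~3 and~4 of the $\beta\gamma_1\cdots\gamma_r$ decomposition. This gives exactly your series $\frac{x}{1-x}+\frac{x^2}{(1-x)^2(1-2x)}$. The paper's display writes $\frac{1}{1-x}$ for the all-zero term, which is a typo; yours is right.
\item For the four triples that reduce to length-$3$ patterns, the paper simply cites Class~41 of Callan--Mansour, as you propose.
\end{itemize}

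The genuine difference is $\{0101,0112,0121\}$. The paper reuses the $\alpha_1\cdots\alpha_r\beta$ decomposition of $\{0101,0112\}$ and changes only $B(x)$: the subsequence $0\,t\,a\,t$ forbids the letters $1,\dots,a-1$ in the decreasing part of $\beta$, so $B(x)=x^2/(1-x)^4$. You instead refine the tree of Lemma~\ref{lem:0121-0112-tree}. This works, but the label $(011)$ must be split by last letter exactly as $(01)$ is; a merely ``truncated'' $(011)$ branch is not a valid label, since its nodes have one or two children depending on whether they end in $0$ or in $m$.

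Let $(01)$ and $(011)$ now denote nodes ending in $m=\max(x)$, and let $(010)$ and $(0110)$ denote the corresponding nodes ending in $0$. The rules are
\[
(0)\rightsquigarrow(0)(01),\qquad (01)\rightsquigarrow(010)(011)(01),\qquad (010)\rightsquigarrow(010)(01),
\]
\[
(011)\rightsquigarrow(0110)(011),\qquad (0110)\rightsquigarrow(0110).
\]
For $n\ge2$ the level counts are $1$, $2^{n-2}$, $2^{n-2}-1$, $2^{n-2}-1$, $2^{n-2}-n+1$, which sum to $2^n-n$. The paper's route is a one-line change to an already computed generating function, but it leans on the noncrossing block decomposition. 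Yours is self-contained, needs only local checks, and gives label-refined counts.

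Your optional direct arguments for the reduced triples would make the proof independent of Callan--Mansour. Your tree deletions for $\{101,102,021\}$ and $\{101,120,021\}$ are fine; in the latter, $120$ deletes the $(0,m)$ child of $(1,m)$ for $m\ge2$. Two slips need fixing.

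\begin{itemize}
\item For $\{101,102,120\}$, the words that are not weakly increasing are $0^{i_0}1^{i_1}\cdots m^{i_m}(m-1)^j$ with $j\ge1$. These correspond to compositions of $n$ into at least three parts, so there are $2^{n-1}-n$ of them; the correction is not linear.
\item For $\{102,120,021\}$, the letter $m-1$ cannot follow $m$ when $m\ge2$, since $0\,m\,(m-1)$ is a $021$. The class is the $2^{n-1}$ RGFs over $\{0,1\}$ together with the $2^{n-1}-n$ weakly increasing RGFs of maximum at least $2$.
\end{itemize}

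Your motivating heuristic $1+1+(n-2)+(2^{n-1}-n)$ sums to $2^{n-1}$, not $2^n-n$. It plays no role in the proof, but do not rely on it.
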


\begin{proof} We divide the proof into three parts.

\noindent
{\bf Enumeration of $\mathcal A_n(\{0101, 0102, 0112\})$.} 
Building on the structural characterization of sequences avoiding $\{0102,0112\}$, we further impose avoidance of $0101$. 
In the proof of Theorem~\ref{thm:L2-4}, the contribution of the suffix $\gamma_1\cdots\gamma_r$ was partitioned into four cases. 
Under the additional restriction $0101$, only two cases, namely, Case~3 and Case~4, remain admissible. 
Consequently, by applying the enumeration results from Theorem~\ref{thm:L2-4}, we obtain
\[
F(x) = B(x)\bigl(G_3(x) + G_4(x)\bigr) + \frac{x}{1-x} 
       = \frac{x^2}{(1-x)^2(1-2x)} + \frac{1}{1-x},
\]
whose coefficients are given explicitly by $a_{\{0101,0102,0112\}}(n) = 2^n - n$.

\medskip
\noindent
{\bf Enumeration of $\mathcal A_n(\{0101,0112,0121\})$.}
By the structural decomposition of $\mathcal A_n(\{0101,0112\})$ established in the proof of Theorem~\ref{thm:L2-4}, it remains only to modify the contribution of the final block $\beta$ in order to impose the additional avoidance of $0121$. In the case of $\mathcal A_n(\{0101,0112\})$, the weakly decreasing part of $\beta$ may involve the letters $a,a-1,\dots,1,0$. The additional restriction $0121$ excludes every letter $t$ with $1\le t\le a-1$, since the resulting sequence would contain the subsequence $0tat$, which forms the pattern $0121$. Hence the weakly decreasing part of $\beta$ can involve only the letters $a$ and $0$. Therefore, the generating function for $\beta$ becomes
\begin{equation}\label{BBB}
B(x)=\left(\frac{x}{1-x}\right)\left(\sum_{a\ge1}x^a\left(\frac{1}{1-x}\right)^2\right)=\frac{x^2}{(1-x)^4}.
\end{equation}
By \eqref{eq:A0112-A} and \eqref{BBB}, it follows that the generating function for all nonzero sequences in the set $\mathcal A_n(\{0101,0112,0121\})$ is
\[
\sum_{r\ge0}A(x)^rB(x)=\frac{B(x)}{1-A(x)}=\frac{x^2}{(1-x)^2(1-2x)}.
\]
Adding the contribution of the all-zero sequences, namely $\frac{x}{1-x}$, we obtain the same generating function as $\mathcal A_n(\{0101,0102,0112\})$. Therefore, for $n \geq 1$, we have $a_{\{0101,0112,0121\}}(n)=2^n-n$.

\medskip
\noindent
{\bf Enumeration of $\mathcal A_n(\{0101,0102,0120\})$, $\mathcal A_n(\{0101,0102,0121\})$, $\mathcal A_n(\{0101,0120,0121\})$, and $\mathcal A_n(\{0102,0120,0121\})$}. By Lemmas~\ref{RGF-lem} and~\ref{RGF-lem-wilf}, for all $n\ge 1$,
\[
\begin{aligned}
a_{\{0101,0102,0120\}}(n) &= a_{\{101,102,120\}}(n), &
a_{\{0101,0102,0121\}}(n) &= a_{\{101,102,021\}}(n),\\
a_{\{0101,0120,0121\}}(n) &= a_{\{101,120,021\}}(n), &
a_{\{0102,0120,0121\}}(n) &= a_{\{102,120,021\}}(n),
\end{aligned}
\]
so all four classes belong to Class~41 in \cite[Table~1]{CallanMansour2025}, and their enumerations follow directly from the results there. The proof of Theorem~\ref{thm:L3-6} is complete. \end{proof}

\section{Simultaneous avoidance of four and five patterns}\label{four-sec}
\subsection{Sets \texorpdfstring{$\{0101,0102,0112,0120\}$, $\{0101,0102,0112,0121\}$ and $\{0101,0112,0120,0121\}$}{{0101,0102,0112,0120}, {0101,0102,0112,0121}and {0101,0112,0120,0121}}}

\begin{thm}\label{thm:L4-3}
For $n\geq 1$, we have
\begin{equation*}
a_{\{0101,0102,0112,0120\}}(n)=a_{\{0101,0102,0112,0121\}}(n)=a_{\{0101,0112,0120,0121\}}(n)=1+\binom{n+1}{3}.
\end{equation*}
\end{thm}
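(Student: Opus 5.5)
We will treat the three sets separately, each time refining a generating tree or structural decomposition already obtained in an earlier proof. Every set contains $0101$, so by Lemma~\ref{RGF-lem} all sequences involved are RGFs.

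\textbf{The set $\{0101,0102,0112,0120\}$.} We start from the generating tree for $\{0101,0112,0120\}$ in the proof of Theorem~\ref{thm:L3-1} and delete every branch that creates $0102$. Appending $\max(x)+1$ to a sequence containing $010$ creates $0102$, so the label $(0102)$, and with it $(01022)$, disappears. The rules become
\[
(0)\rightsquigarrow(0)(01),\quad (01)\rightsquigarrow(010)(011)(012),\quad (010)\rightsquigarrow(010),\quad (011)\rightsquigarrow(0110)(011),
\]
\[
(0110)\rightsquigarrow(0110),\quad (012)\rightsquigarrow(0121)(0122)(012),\quad (0122)\rightsquigarrow(0121)(0122),\quad (0121)\rightsquigarrow(0121).
\]
Before using these, we must check two things:
\begin{itemize}
\item No surviving branch creates $0102$. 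For example, from $(012)$ the active suffix $12\cdots m$ may have $m+1$ appended, since no $0$ can follow a letter $\geq 2$.
\item Label $(0110)$ allows only repetition of $0$.
\end{itemize}
The counts are then $z_n=a_n=1$, $c_n=n-2$, $b_n=n-2$, $e_n=\binom{n-2}{2}$, $d_n=n-2$, $h_n=\binom{n-2}{2}$, $g_n=\binom{n-2}{3}+\binom{n-2}{2}$. Summing and applying Pascal's identity should give $1+\binom{n+1}{3}$. We will verify this identity and the small values $1,2,5,11,21$ directly.

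\textbf{The set $\{0101,0102,0112,0121\}$.} We use the decomposition from the proof of Theorem~\ref{thm:L3-6}, namely the generating function $B(x)(G_3+G_4)+\frac{x}{1-x}$ for $\{0101,0102,0112\}$, and additionally forbid $0121$. In $\beta=0^i12\cdots a\,a^{i_1}\cdots 1^{i_a}$, the letters $1,\dots,a-1$ in the decreasing part are now excluded. The generating function of $\beta$ therefore becomes $\frac{x}{1-x}\sum_{a\ge1}\frac{x^a}{1-x}=\frac{x^2}{(1-x)^3}$. We will check that the trailing block of $0$'s (Case~4) is still harmless. Then
\[
F(x)=\frac{x^2}{(1-x)^3}\left(1+\frac{x}{1-x}\right)+\frac{x}{1-x}=\frac{x^2}{(1-x)^4}+\frac{x}{1-x},
\]
whose coefficients are $\binom{n+1}{3}+1$.

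\textbf{The set $\{0101,0112,0120,0121\}$.} We again refine the tree of Theorem~\ref{thm:L3-1}, this time deleting every branch producing $0121$. This removes label $(0121)$ and the $t=m-1$ children, and yields
\[
(012)\rightsquigarrow(0122)(012),\quad (0122)\rightsquigarrow(0122),\quad (0102)\rightsquigarrow(012)(01022),
\]
with the remaining rules unchanged. The main care here is that appending $m-1$ after a $(0102)$ or $(01022)$ node was already forbidden, so the rules there are not altered. Summing the resulting linear recurrences should again give $1+\binom{n+1}{3}$; equivalently, its generating function should match the one above.

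The main obstacle is justifying the pruned succession rules: confirming in each case that the new pattern kills exactly the claimed branches and that labels still determine the children. The summations themselves are routine binomial identities.
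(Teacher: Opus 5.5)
Your proposal is correct. For $\{0101,0102,0112,0120\}$ and $\{0101,0112,0120,0121\}$ you follow the paper exactly: you prune the tree of Theorem~\ref{thm:L3-1}, losing the labels $(0102)$ and $(01022)$ in the first case and $(0121)$ in the second. Your counts are right, and the total $2+3(n-2)+3\binom{n-2}{2}+\binom{n-2}{3}$ equals $1+\binom{n+1}{3}$ by Vandermonde. The closed forms such as $a_n=1$ and $b_n=n-2$ hold only for $n\ge2$, so $n=1$ needs its own check. (Appending $m+1$ to a $(012)$ node is safe simply because, after pruning, such nodes are $0^i12\cdots m$ and contain no $aba$ with $a<b$.)

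The genuine difference is the set $\{0101,0102,0112,0121\}$. The paper only asserts that its tree is the $\{0101,0102,0112,0120\}$ tree with label $(0121)$ replaced by $(0120)$. You instead take the decomposition $x=\beta\,0^p$ from Theorem~\ref{thm:L3-6}. You note that $0\,t\,a\,t$ rules out every letter $1\le t\le a-1$ in the decreasing part of $\beta$, and that trailing $0$'s can never serve as the last letter of $0121$. From this you read off $\frac{x^2}{(1-x)^4}+\frac{x}{1-x}$.

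Your route is self-contained and fully justified, and it gives an explicit description of the class: $0^n$ or $0^i12\cdots a\,a^j0^p$. The paper's relabeling is shorter and explains the Wilf equivalence structurally. Implicitly it is the bijection $0^i12\cdots m\,m^p(m-1)^q\mapsto 0^i12\cdots m\,m^p0^q$ for $m\ge2$, and the identity elsewhere. However, the paper leaves that correspondence unverified, and your computation effectively supplies the verification.
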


\begin{proof} We divide the proof into two parts.

\noindent
{\bf Enumeration of $\mathcal A_n(\{0101,0102,0112,0120\})$ and $\mathcal A_n(\{0101,0112,0120,0121\})$.}
The generating tree for $\mathcal A_n(\{0101,0102,0112,0120\})$ (resp., $\mathcal A_n(\{0101,0112,0120,0121\})$) is obtained from that for $\mathcal A_n(\{0101,0112,0120\})$ by removing all labels and branches corresponding to the pattern $0102$ (resp., $0121$). Thus, in the recurrences derived in the proof of Theorem~\ref{thm:L3-1}, the terms $f_n$ and $i_n$ (resp., $g_n$) disappear, and the only remaining change is that $d_{n+1}=a_n+d_n$ (resp., no change). Consequently, \(a_{\{0101,0102,0112,0120\}}(n)=a_{\{0101,0112,0120,0121\}}(n)=1+\binom{n+1}{3}\).

\medskip
\noindent
{\bf Enumeration of $\mathcal A_n(\{0101,0102,0112,0121\})$.} Since the corresponding generating trees differ only by replacing the label $(0121)$ with $(0120)$, we conclude that $a_{\{0101,0102,0112,0121\}}(n)=a_{\{0101,0102,0112,0120\}}(n)$. The proof of Theorem~\ref{thm:L4-3} is complete. 
\end{proof}

\subsection{Sets \texorpdfstring{$\{0101,0102,0120,0121\}$ and $\{0102,0112,0120,0121\}$}{{0101,0102,0120,0121} and {0102,0112,0120,0121}}}

\begin{thm}\label{thm:GT-0101-0102-0120-0121}
For $n\ge1$, $a_{\{0101,0102,0120,0121\}}(n)=a_{\{0102,0112,0120,0121\}}(n)=2^{\,n-1}+\binom{n-1}{2}$.
\end{thm}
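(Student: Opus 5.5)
Both sets consist of RGFs, since each contains $0102$ (Lemma~\ref{RGF-lem}). Rather than build generating trees, I would describe each class explicitly and count it by elementary composition arguments. In both cases the sequences split according to whether $\max(x)=0$, $\max(x)=1$, or $\max(x)\ge2$, and the total comes out as $2^{n-1}+\binom{n-1}{2}$.

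\textbf{The set $\{0101,0102,0120,0121\}$.} By Lemmas~\ref{RGF-lem} and~\ref{RGF-lem-wilf}, this class equals $\mathcal A_n(\{101,102,120,021\})$, and I would work with the short patterns.

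If $\max(x)=1$, avoiding $101$ forces $x=0^i1^j0^k$ with $i,j\ge1$ and $k\ge0$. There are $n-1$ such sequences with $k=0$ and $\binom{n-1}{2}$ with $k\ge1$.

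If $\max(x)=m\ge2$, I claim $x$ is weakly increasing. The argument runs in three steps.
\begin{itemize}
\item A $0$ after the first $1$ and before the first $2$ gives $102$ together with that later $2$.
\item A $0$ after the first $2$ gives $120$.
\item After a letter $b\ge2$ has appeared, any later letter $t$ with $0<t<b$ gives $0bt\sim 021$.
\end{itemize}
Hence every letter after the first $2$ is at least the current maximum. Combined with the RGF property, $x$ is weakly increasing with value set $\{0,\dots,m\}$. Such sequences correspond to compositions of $n$ into $m+1$ parts, so summing over $m\ge2$ gives $2^{n-1}-1-(n-1)$.

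Adding the single all-zero sequence, the total is
\[
1+(2^{n-1}-n)+(n-1)+\binom{n-1}{2}=2^{n-1}+\binom{n-1}{2}.
\]

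\textbf{The set $\{0102,0112,0120,0121\}$.} I would start from the classification of $x^+$ in the proof of Theorem~\ref{thm:L2-0112,0120}. The label $(0121)$ is excluded because of $0121$, so the remaining labels are $(0)$, $(01)$, $(011)$, $(012)$ and $(0122)$.

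If $\max(x)\le1$, the sequences are $0^n$ together with the sequences $0^i1w$ where $i\ge1$ and $w\in\{0,1\}^*$. The pattern $0102$ cannot occur since no letter $2$ is present, and $0112$, $0120$, $0121$ cannot occur for the same reason. This gives $1+\sum_{i=1}^{n-1}2^{n-1-i}=2^{n-1}$ sequences.

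If $\max(x)=m\ge2$, then $x^+=12\cdots(m-1)m^p$ with $p\ge1$. No $0$ can follow the first $1$: a $0$ before the first $2$ gives $0102$, and a $0$ after the first $2$ gives $0120$. So $x=0^i12\cdots(m-1)m^p$ with $i\ge1$, $m-1\ge1$ and $p\ge1$. These are in bijection with compositions of $n$ into three positive parts, giving $\binom{n-1}{2}$ sequences.

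The total is again $2^{n-1}+\binom{n-1}{2}$.

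\textbf{Main obstacle.} The delicate step is the forbidden-letter analysis in the first set when $\max(x)\ge2$. One must check carefully that every non-monotone continuation creates one of $101$, $102$, $120$ or $021$. One must also check that no weakly increasing RGF contains any of these four patterns, which holds because each of them has a descent. The remaining counts are routine.
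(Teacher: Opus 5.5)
Your proof is correct, and it uses the same case split as the paper. For $\{0101,0102,0120,0121\}$ the paper also passes to $\{101,102,120,021\}$. It then builds a generating tree whose labels $(0)$, $(01)$, $(010)$, $(012)$ are exactly your four classes: all-zero, $0^i1^j$, $0^i1^j0^k$, and weakly increasing with maximum at least $2$. Solving its recurrences produces the same summands $1$, $n-1$, $\binom{n-1}{2}$, $2^{n-1}-n$. For $\{0102,0112,0120,0121\}$ the paper deletes the term $e_n$ from the recurrences of Theorem~\ref{thm:L3-3}. What remains is $2^{n-1}$ from the binary labels plus $(n-2)+\binom{n-2}{2}=\binom{n-1}{2}$ from the labels $(012)$ and $(0122)$, which are your two classes.

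The difference is in method. You prove the structure of each class outright and count by compositions, whereas the paper states succession rules and solves linear recurrences. Your route is self-contained and more precise at the key step. The paper loosely describes the $(012)$ nodes as $0^i12\cdots m$, a family not closed under appending $m$. Your three-step forbidden-letter argument shows that this class is exactly the weakly increasing RGFs with maximum at least $2$, which is what makes each such node have exactly two children. The paper's route, in turn, reuses trees already built for the triples, so the second set costs one line.

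One small omission: in the second set you should say why $0^i12\cdots(m-1)m^p$ avoids $0112$, since your ``each pattern has a descent'' remark does not cover that pattern. It is immediate, because the only repeated positive letter is $m$ and nothing larger follows it.
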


\begin{proof} We divide the proof into two parts.

\noindent
{\bf Enumeration of $\mathcal A_n(\{0101,0102,0120,0121\})$.} By Lemmas~\ref{RGF-lem} and~\ref{RGF-lem-wilf}, we have the equality
$\mathcal A_n(\{0101,0102,0120,0121\})=\mathcal A_n(\{101,102,120,021\})$.
We obtain the generating tree for this class from that for $\mathcal A_n(\{101,102,120\})$, given in Class~41 of Table~1 in \cite{CallanMansour2025}, by further imposing avoidance of $021$. Thus the root label and succession rules are as follows:
$$
\begin{array}{l@{\qquad}l}
(0)   \rightsquigarrow (0)(01)   & (01)  \rightsquigarrow (010)(01)(012)\\
(010) \rightsquigarrow (010)     & (012) \rightsquigarrow (012)(012)
\end{array}
$$

Indeed, if $x$ has label $(01)$, then $x=0^i1^j$ for some $i,j\ge1$, and appending $0$, $1$, or $2$ yields children with labels $(010)$, $(01)$, and $(012)$, respectively. If $x$ has label $(010)$, then $x=0^i1^j0^k$ for some $i,j,k\ge1$, and only $0$ can be appended,since appending $1$ (resp., $2$) creates $101$ (resp., $102$). If $x$ has label $(012)$, then $x=0^i12\cdots m$ for some $m\ge2$, and since the class avoids $120$ and $021$, appending $0$ or any letter in $\{1,\dots,m-1\}$ is forbidden, while appending $m$ or $m+1$ yields two children with the same label $(012)$.

Now let $z_n$, $a_n$, $b_n$, and $c_n$ denote the numbers of nodes at level $n$ carrying the labels $(0)$, $(01)$, $(010)$, and $(012)$, respectively. Then
$$
z_{n+1}=z_n,\quad a_{n+1}=z_n+a_n,\quad b_{n+1}=a_n+b_n,\quad c_{n+1}=a_n+2c_n,
$$
with initial values $z_1=1$ and $a_1=b_1=c_1=0$. Solving these recurrences gives
$$
z_n=1,\qquad a_n=n-1,\qquad b_n=\binom{n-1}{2},\qquad c_n=2^{\,n-1}-n.
$$
Therefore,
$
a_{\{0101,0102,0120,0121\}}(n)=z_n+a_n+b_n+c_n=2^{\,n-1}+\binom{n-1}{2}.
$
This completes the proof.

\medskip
\noindent
{\bf Enumeration of $\mathcal A_n(\{0102,0112,0120,0121\})$.} We begin with the generating tree for the set $\mathcal{A}_n(\{0102,0112,0120\})$ and further impose avoidance of $0121$. Thus, in the recurrences obtained in the proof of Theorem~\ref{thm:L3-3}, the term $e_n$ is removed. It follows that $a_{\{0102,0112,0120,0121\}}(n)=2^{\,n-1}+\binom{n-1}{2}$. This completes the proof of Theorem~\ref{thm:GT-0101-0102-0120-0121}.
\end{proof}

\subsection{Set \texorpdfstring{$\{0101,0102,0112,0120,0121\}$}{{0101,0102,0112,0120,0121}}}

\begin{thm}\label{thm:A-0101-0102-0112-0120-0121}
For $n\ge1$, we have $a_{\{0101,0102,0112,0120,0121\}}(n)=(n-1)^2+1$.
\end{thm}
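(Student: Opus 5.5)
Our plan is to prune the generating tree for $\mathcal A_n(\{0101,0102,0112,0120\})$ from the proof of Theorem~\ref{thm:L4-3} (equivalently, the tree of Theorem~\ref{thm:L3-1} with the $0102$-branches removed) by further forbidding $0121$. Recall that in that pruned tree the surviving labels are $(0)$, $(01)$, $(010)$, $(011)$, $(0110)$, $(012)$, $(0121)$, $(0122)$. The labels $(0102)$ and $(01022)$ were already deleted, so that $(010)\rightsquigarrow(010)$ and $d_{n+1}=a_n+d_n$. Imposing $0121$ additionally deletes the label $(0121)$ together with every branch leading to it. The resulting rules should be
\[
(0)\rightsquigarrow(0)(01),\quad (01)\rightsquigarrow(010)(011)(012),\quad (010)\rightsquigarrow(010),\quad (011)\rightsquigarrow(0110)(011),
\]
\[
(0110)\rightsquigarrow(0110),\quad (012)\rightsquigarrow(0122)(012),\quad (0122)\rightsquigarrow(0122).
\]

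First I would justify each surviving rule directly. A sequence $0^i1^j$ with $i,j\ge1$ (label $(01)$) may be extended by $0$, $1$ or $2$. A sequence $0^i1^j0^k$ may only be extended by $0$, since $1$ gives $0101$ and $2$ gives $0102$. The $(011)$ rules are unchanged; $0121$ cannot occur there because $x^+=1^p$. A sequence labeled $(012)$, i.e. $0^i12\cdots m$, cannot take $0$ or any $t\le m-2$ (this gives $0120$). It cannot take $t=m-1$ either, since $0(m-1)m(m-1)$ is $0121$. Only $m$ and $m+1$ remain. A sequence $0^i12\cdots(m-1)m^p$ with $p\ge2$ (label $(0122)$) additionally loses $m+1$, which gives $0112$, so only $m$ survives. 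The $(0110)$ node admits only $0$, as before. The step I would check most carefully is that no forbidden pattern is hidden in the surviving shapes. I would do this by listing the explicit forms $0^i$, $0^i1^j$, $0^i1^j0^k$, $0^i12\cdots m$, $0^i12\cdots(m-1)m^p$, and $0^i1^j0^k$ (the $(0110)$ case, which coincides in shape with $(010)$ but arises from $j\ge2$).

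Next, let $z_n,a_n,b_n,c_n,d_n,e_n,h_n$ count the labels $(0),(01),(011),(010),(012),(0110),(0122)$. The recurrences are $z_{n+1}=z_n$, $a_{n+1}=z_n$, $b_{n+1}=a_n+b_n$, $c_{n+1}=a_n+c_n$, $d_{n+1}=a_n+d_n$, $e_{n+1}=b_n+e_n$, and $h_{n+1}=d_n+h_n$, with $z_1=1$ and all other counts zero at $n=1$. Solving gives $z_n=1$ and, for $n\ge2$, $a_n=1$, $b_n=c_n=d_n=n-2$, $e_n=h_n=\binom{n-2}{2}$.

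Summing, for $n\ge2$ the total is $2+3(n-2)+(n-2)(n-3)=n^2-2n+2=(n-1)^2+1$, and the case $n=1$ gives $1$ directly. As a cross-check, I would verify the small values $1,2,5,10,17$ by direct listing; for instance $n=4$ gives $1+1+2+2+2+1+1=10$.
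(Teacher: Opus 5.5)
Your proposal is correct and follows essentially the paper's route. Both prune the generating tree of Theorem~\ref{thm:L3-1} by deleting the $0102$- and $0121$-branches; the paper imposes $0102$ last and you impose $0121$ last, which makes no difference. Both arrive at the same seven labels and the recurrences $z_{n+1}=z_n$, $a_{n+1}=z_n$, $b_{n+1}=a_n+b_n$, $c_{n+1}=a_n+c_n$, $d_{n+1}=a_n+d_n$, $e_{n+1}=b_n+e_n$, $h_{n+1}=d_n+h_n$, which you justify and solve more explicitly than the paper does.

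One small descriptive slip: label $(01)$ must mean $0^i1$ with a single $1$, while the shapes $0^i1^j$ with $j\ge2$ carry label $(011)$. Otherwise appending $1$ to a $(01)$ node would not yield $(011)$, and your count $a_n=1$ would be wrong.
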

\begin{proof}
Starting from the generating tree for $\mathcal A_n(\{0101,0112,0120,0121\})$ obtained in the proof of Theorem~\ref{thm:L4-3}, we further impose avoidance of $0102$. Equivalently, the generating tree for $\mathcal A_n(\{0101,0102,0112,0120,0121\})$ is obtained by deleting all labels and branches corresponding to the pattern $0102$.
Thus, in the recurrences derived in the proof of Theorem~\ref{thm:L3-1}, the terms $f_n$, $g_n$, and $i_n$ disappear, and the only remaining change is that $d_{n+1}=a_n+d_n$. Consequently, \(a_{\{0101,0102,0112,0120,0121\}}(n)=(n-1)^2+1\).
This completes the proof of Theorem~\ref{thm:A-0101-0102-0112-0120-0121}.\end{proof}

\section{Concluding remarks}\label{concluding}

We have investigated the avoidance of subsets of the five patterns
$0101$, $0102$, $0112$, $0120$, and $0121$ in ascent sequences, obtaining a nearly complete enumerative picture and identifying $16$ Wilf equivalence classes. Several natural problems remain open. In particular, the enumeration of $0120$-avoiding and $0121$-avoiding ascent sequences, as well as their simultaneous avoidance, remains unresolved and appears to require new ideas.

Finally, Table~\ref{tab-results} indicates numerous connections to sequences in the OEIS~\cite{OEIS}, which collectively encode many interesting combinatorial objects. Explaining the most intriguing of these connections bijectively, by relating our pattern-avoiding ascent sequences to such objects, is an interesting direction for future research.

\section*{Acknowledgements} 
The work of Philip B. Zhang was supported by the Natural Science Foundation of Tianjin Municipal (No.\ 25JCYBJC00430).

\end{document}